\newcommand{\DEF}[1]{\textit{#1}}
\newcommand{\dsp}{\displaystyle}
\newtheorem{thm}{Theorem}[section]
\newtheorem{lem}[thm]{Lemma}
\newtheorem{prp}[thm]{Proposition}
\newtheorem{alg}[thm]{Algorithm}
\newcommand{\mapsfrom}{\mathrel{\reflectbox{\ensuremath{\mapsto}}}}
\begin{document}

\setlength{\baselineskip}{1.25 \baselineskip}

\def \today {\number \day \ \ifcase \month \or January\or February\or
  March\or April\or May\or June\or July\or August\or
  September\or October\or November\or December\fi\
  \number \year}


\title{\sffamily On lengths of burn-off chip-firing games} 

\author{
   \textsc{P. Mark Kayll\footnotemark} \\[0.25em]
    {\small\textit{Department of Mathematical Sciences}} \\[-0.25em]
    {\small\textit{University of Montana}} \\[-0.25em]
    {\small\textit{Missoula MT 59812, USA}} \\[-0.1em]
    {\small\texttt{mark.kayll@umontana.edu}}
    \and
    \textsc{Dave Perkins\footnotemark} \\[0.25em]
    {\small\textit{Computer Science Department}} \\[-0.25em]
    {\small\textit{Hamilton College}} \\[-0.25em]
   {\small\textit{Clinton NY 13323, USA}} \\[-0.1em]
    {\small\texttt{dperkins@hamilton.edu}}
   }

\date{\small 02-02-2020}

\maketitle

%
\renewcommand{\thefootnote}{}
\footnotetext{2010 {\em MSC\/}:\ Primary
05C57;   
Secondary
05C85,   
05C05,   
05C25,   
60J20,   
68R10,   
91A43.   
}

%
%
\renewcommand{\thefootnote}{\fnsymbol{footnote}}

\addtocounter{footnote}{1}
\footnotetext{This work was partially supported by a grant from the Simons Foundation (\#279367 to Mark Kayll).} 
\addtocounter{footnote}{1}
\footnotetext{Part of this work appears in the author's PhD dissertation~\cite{perkins2005}.}

\begin{center}
\phantom{In recognition of Gary MacGillivray's milestone birthday in 2020} 
\phantom{No jokes in the paper, but not so in the talk!}
\end{center}

    \begin{abstract}

\noindent
We continue our studies of burn-off chip-firing games from
[\textit{Discrete Math.\ Theor.\ Comput.\ Sci.} \textbf{15} (2013), no.~1, 
  121--132; MR3040546]
and
[\textit{Australas.\ J.\ Combin.} \textbf{68} (2017), no.~3, 
  330--345; MR3656659].
The latter article introduced randomness by
choosing successive seeds uniformly from the vertex set of a
graph $G$. The length of a game is the number of vertices 
that fire (by sending a chip to each neighbor and annihilating 
one chip) as an excited chip
configuration passes to a relaxed state. This article determines
the probability distribution of the game length in a long sequence
of burn-off games. Our main results give exact counts for the 
number of pairs $(C,v)$, with $C$ a relaxed legal configuration
and $v$ a seed, corresponding to each possible length.
In support, we give our own proof of the well-known equicardinality
of the set $\mathcal{R}$ of relaxed legal configurations on 
$G$ and the set of spanning trees in the cone $G^*$ of $G$. 
We present an algorithmic, bijective proof of this correspondence.

\bigskip
\noindent
\emph{Keywords:} chip-firing,  burn-off game, relaxed legal
configuration, spanning tree, Markov chain, game-length probability,
sandpile group

   \end{abstract}

\section{Introduction}

This article continues our study in \cite{KayllPerkins2013} and
\cite{PerkinsKayll2016} of burn-off chip-firing games, in 
which each iteration simulates the loss of energy from a 
complex system.  
These games are played on graphs and consist of a sequence of
`seed-then-relax' steps, wherein a chosen vertex is excited (by adding
a `chip' to it) after which the system (i.e.\ a graph containing chips
on its vertices) is allowed to `relax'. During relaxation, certain
vertices `fire' (by sending chips to their neighbors and annihilating
a chip); the `length' of a game is the number of such vertices. We
shall see that the firing order and number of firings at any given
moment has no effect on the eventual relaxation; so, e.g., the notion
of length is well defined (see Lemmas~\ref{lem-2.1} and
\ref{lem-4.8}).  In \cite{PerkinsKayll2016}, we introduced randomness
to these games by choosing each successive seed uniformly at random
from among all possible vertices. The present work aims primarily at
shedding light on the probability distribution of the game length in a
long sequence of burn-off games. Our main results in this
direction---Proposition~\ref{prp-4.7} and Theorem~\ref{thm-4.9}---give
exact counts for the number of pairs $(C,v)$, with $C$ a `relaxed
legal chip configuration' and $v$ a seed vertex, corresponding to each
possible game length.

En route to these results, we (re)discovered that, for a graph $G$, our
set $\mathcal{R}$ of relaxed legal configurations on $G$ is
equicardinal to the set $\mathcal{S}$ of spanning trees in the `cone'
$G^*$ of $G$. We present an algorithmic, bijective proof of this fact
in Section~\ref{Sec-counting-R} (Theorem~\ref{thm-R-is-S}).
The connection between chip firing and spanning tree enumeration has
been addressed by numerous authors (e.g., 
\cite{Baker-Shokrieh2013},
\cite{BCT2010}, \cite{Biggs-Winkler97}, \cite{Biggs99},
\cite{Holroyd-et-al2008}, \cite{KayllPerkins2013}), but we 
present our take for several 
reasons. First, our main results in Section~\ref{Sec-dist-game-lengths}
rest on ideas in our proof in Section~\ref{Sec-counting-R}.
Second, that
\begin{equation}
\label{main-RS}
|\mathcal{R}|=|\mathcal{S}|
\end{equation}
is a key connecting $\mathcal{R}$ with the `sandpile group'
$K(G^*)$; 
thus we recover an appealing description of the elements of this group.
Finally, we hope that our constructive proof stands up,
of interest in its own right.

We attempt neither a literature review nor a discussion of background
or motivation for chip firing. 
Perhaps the most immediate resource for related material is David
Perkinson's beautiful Sandpiles
website~\cite{Perkinson-sandpile-page}, 
which, besides literature links, provides access to simulation
software including Sage tools. 
We also point the reader to our
other papers \cite{KayllPerkins2013}, \cite{PerkinsKayll2016}, 
\cite{PerkinsKayll2017}, to the surveys \cite{Holroyd-et-al2008}, 
\cite{Merino2005}, to the books \cite{CorryPerkinson2018},  \cite{Klivans2019},
and to the concise but thorough 
AMS column \cite{Levine-Propp2010}.

The rest of this article is organized as
follows. First (in Section~\ref{sect-description}), we introduce the basic
chip-firing notions, including the undefined terms already
encountered. In Section~\ref{sect-sandpile-rmks}, we take a brief detour to
explain the connection between $\mathcal{R}$ and $K(G^*)$ 
implied by (\ref{main-RS}). 
Section~\ref{Sec:background_results} details
the earlier lemmas and tools supporting our main results. In
Section~\ref{Sec-counting-R}, we present our 
proof of (\ref{main-RS}). Our main results counting pairs
$(C,v)\in\mathcal{R}\times V$ with specified game lengths appear in
Section~\ref{Sec-dist-game-lengths}. In Section~\ref{Sec:example}, we
close with an example illustrating the use of
Theorem~\ref{thm-R-is-S}, Proposition~\ref{prp-4.7}, and
Theorem~\ref{thm-4.9} in determining the probability distribution for
game length. 

\subsubsection*{Notation and terminology}

In this paper, all graphs are finite, simple, and undirected.  We
usually think of playing burn-off games on connected graphs, but
most of our results don't require connectivity; cf.\ the first
paragraph in the proof of Theorem~\ref{thm-R-is-S}. We use 
`general graph' when we wish to emphasize that a graph 
may be disconnected. The order of a graph $G=(V,E)$ is denoted 
by $n$ ($:=|V|$).  If $G$ has a subgraph $X$ and $v\in V(G)$, then
$\Gamma_X(v)$ denotes the set of neighbors of $v$ that lie in $V(X)$.
If $G$ is connected and $u,v\in V$, then the least length of a
$uv$-path in $G$ is the \DEF{distance} $d_G(u,v)$ from $u$ to
$v$. Finally, we write $\tau=\tau(G)$ for the number of spanning trees
of $G$. 

We mainly follow usual graph theory conventions as found, e.g., in
\cite{BondyMurty08} and refer the reader there for any 
omitted items of this sort. A graph theory reference that
addresses chip firing specifically is \cite{GodsilRoyle2001}. For
probability background, see the classic~\cite{Feller-vI-68}.

\subsection{Burn-off chip firing}
\label{sect-description}

Beginning with a \DEF{(chip) configuration} on a graph
$G=(V,E)$---i.e., a function $C\colon V\to\mathbb{N}$---a
\DEF{burn-off (chip-firing) game} plays as follows.
For a vertex $v$, if $C(v)$ exceeds $\deg_{G}(v)$, then $v$ can
\DEF{fire}, meaning it sends one chip to each neighbor and one chip
into `thin air'. Formally, when $v$ fires, $C$ is
modified to a configuration $C'$ such that
\begin{equation}
\label{fire-two}
C'(u) = \left\{\begin{array}{ll}
    C(v)-\deg_G(v)-1 & \text{if } u=v, \\
    C(u)+1 & \text{if } uv\in E(G), \\
    C(u) & \text{if } v\neq u\not\sim v.
    \end{array}\right.
\end{equation}
As we noted in \cite{KayllPerkins2013}, the game just defined is
equivalent to the `dollar game' of Biggs~\cite{Biggs99} in the
case when his `government' vertex is adjacent to every other
vertex in the underlying graph; it is
also equivalent to the sandpile model on $G^*$ (see, e.g.,
\cite{Holroyd-et-al2008}). 

For a configuration $C$, a vertex $v$ is \DEF{critical} if
$C(v)=\deg_G{(v)}$ and \DEF{supercritical} if $C(v)>\deg_G{(v)}$.  A
\DEF{relaxed} configuration is one for which no vertex can fire.  To
start a burn-off game, we add a chip to a selected vertex $v$ (called
a \DEF{seed}) in a relaxed configuration $C$. This is called
\DEF{seeding} $C$ at $v$ and is sometimes denoted algebraically: by
writing $\mathbf{1}_v$ for the configuration with a total of one chip,
on $v$ only, and passing from $C$ to $C+\mathbf{1}_v$. Just prior to
seeding, if $v$ happened to be critical, then from $C+\mathbf{1}_v$,
we fire $v$, which may trigger a neighbor $u$ of $v$ to become
supercritical. If so, we fire $u$, which may trigger another vertex to
become supercritical. The game follows this cascade until reaching a
relaxed configuration, called a \DEF{relaxation} of
$C+\mathbf{1}_v$. The game \DEF{length} equals the number of vertex
firings, possibly zero, in passing from the initial relaxed
configuration to the final one.

In a long game sequence, certain sparse configurations will cease to
appear after enough seedings. Let us suppose, for example, that a game
sequence is initialized with the all-zeros configuration.  Except on a
trivial graph, this configuration will never recur, and a
configuration $\mathbf{1}_v$ on a triangle ($K_3$) also will never
be seen after its first occurence. Loosely speaking, by `legal'
configurations, we mean those typically encountered in a long game
sequence. To define these formally, we begin by calling a
configuration \DEF{supercritical} if every vertex is supercritical.
We follow our earlier papers 
\cite{KayllPerkins2013}, \cite{PerkinsKayll2016},  
and focus on the configurations that can result from relaxing
supercritical ones. First
consider what happens when a burn-off game is played in reverse. 
Considering (\ref{fire-two}), we see that to start in a configuration
$C'$ and \DEF{reverse-fire} a vertex $v$
(each of whose neighbors $u$
necessarily satisfies $C'(u)\geq 1$) means to modify $C'$ to a
configuration $C$ such that
\begin{equation*}
\label{rev-fire-two}
C(u) = \left\{\begin{array}{ll}
    C'(v)+\deg_G(v)+1 & \text{if } u=v, \\
    C'(u)-1 & \text{if } uv\in E(G), \\
    C'(u) & \text{if } v\neq u\not\sim v.
    \end{array}\right.
\end{equation*}
Now a configuration $C$ is \DEF{legal} if there exists a
reverse-firing sequence starting with $C$ and ending with a
supercritical configuration. Throughout this paper, we use
$\mathcal{R}=\mathcal{R}(G)$ to denote the set of relaxed legal
configurations on $G$.

A relaxed configuration $C$ is \DEF{recurrent} if, given any
(unrestricted) configuration $C'$, it is possible to pass from $C'$ to
$C$ via a sequence of seeding vertices and firing supercritical ones.

\subsection{The sandpile group}
\label{sect-sandpile-rmks}

As mentioned following (\ref{main-RS}), the set 
$\mathcal{R}$ is linked to $G^*$'s sandpile group, which
we proceed to define (see Section~\ref{Sec-counting-R}
for a definition of $G^*$ itself). Start by viewing configurations 
$C\colon V\to\mathbb{N}$ as elements of the group
$\mathbb{Z}^{V}$. Looking at (\ref{fire-two}), notice that firing a
vertex $v\in V$ corresponds to adding to $C$ the vector
$\Delta_{v}\in\mathbb{Z}^{V}$ with entries
\begin{equation*}
\Delta_{v,u} := \left\{\begin{array}{cl}
    -\deg_{G^*}(v) & \text{if } u=v, \\
    1 & \text{if } uv\in E(G), \\
    0 & \text{if } v\neq u\not\sim v,
    \end{array}\right.
\end{equation*}
in which $u$ runs through $V$. The matrix
$\Delta:=(\Delta_{v,u})=(\Delta_{u,v})$ is the
\DEF{reduced Laplacian} of $G^*$ (``reduced'' as it omits the row/column
corresponding to the universal vertex introduced in passing 
from $G$ to $G^*$), and thus we see that chip firing provides a
natural setting for the appearance of $\Delta$ (see, e.g.,
\cite{BondyMurty08} for background on the graph Laplacian). 
The idea that configurations appearing in a sequence of
vertex firings enjoy an intimate connection motivates 
calling two configurations $C$, $D$ \DEF{firing equivalent}  exactly
when $C-D$ lies in the $\mathbb{Z}$-linear span $\Delta\mathbb{Z}^{V}$
of the vectors $\Delta_{v}$, i.e., when $C$ and $D$ lie in the same
coset of the quotient group $\mathbb{Z}^{V}/\Delta\mathbb{Z}^{V}$.
This is the \DEF{sandpile group} of $G^*$ and is denoted by
$K(G^*)$. Our discussion here follows \cite{Levine-Propp2010}, which
gives a chockablock introduction to the subject.

Before presenting our own results, we record an observation on 
the role of (\ref{main-RS}) in connecting $\mathcal{R}$ with $K(G^*)$.

\begin{prp}
  \label{R-is-sandpile_grp}
The elements of $\mathcal{R}$  can serve as a set of representatives
for $K(G^*)$.
\end{prp}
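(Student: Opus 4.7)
The plan is to show that the composition of the inclusion $\mathcal{R}\hookrightarrow\mathbb{Z}^V$ with the quotient map $\mathbb{Z}^V\to K(G^*)$ is a bijection. I would first verify surjectivity by a ``shift-and-relax'' construction, then reduce injectivity to a cardinality count.

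For surjectivity, fix an arbitrary $C\in\mathbb{Z}^V$. The key observation is that $\deg_{G^*}(v)=\deg_G(v)+1$ for every $v\in V$, so the vector $-\sum_{v\in V}\Delta_v\in\Delta\mathbb{Z}^V$ has every coordinate equal to $+1$: its $u$-th entry is $\deg_{G^*}(u)-|\Gamma_G(u)|=1$. Consequently, for $k$ sufficiently large, the configuration $C^\sharp:=C-k\sum_{v\in V}\Delta_v$ lies in the same coset as $C$ and satisfies $C^\sharp(v)>\deg_G(v)$ for every $v$; in particular, $C^\sharp$ is supercritical. Lemma~\ref{lem-2.1} guarantees that any firing sequence of supercritical vertices from $C^\sharp$ terminates in a unique relaxed configuration $\widehat{C}$. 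Since each firing amounts to adding some $\Delta_v$, we obtain $\widehat{C}\equiv C^\sharp\equiv C\pmod{\Delta\mathbb{Z}^V}$. Reversing the firing sequence from $C^\sharp$ down to $\widehat{C}$ yields a reverse-firing sequence from $\widehat{C}$ back to the supercritical $C^\sharp$, certifying $\widehat{C}\in\mathcal{R}$.

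For injectivity, I would lean on cardinality rather than a direct uniqueness argument. Theorem~\ref{thm-R-is-S} gives $|\mathcal{R}|=|\mathcal{S}|=\tau(G^*)$, while Kirchhoff's matrix-tree theorem applied to the reduced Laplacian yields $|K(G^*)|=\det\Delta=\tau(G^*)$. A surjection between finite sets of equal cardinality is automatically a bijection, so $\mathcal{R}$ meets each coset of $\Delta\mathbb{Z}^V$ in exactly one element.

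The main obstacle is the shift step of surjectivity: producing an element of $\Delta\mathbb{Z}^V$ whose coordinates are all strictly positive. This is precisely where the passage from $G$ to its cone $G^*$ is indispensable, since the row sums of the Laplacian of $G$ itself vanish and offer no coset-preserving uniform upward shift. Once the shift is in hand, the remaining ingredients---convergent relaxation, reversibility of firing, and matrix-tree counting---are all on the shelf.
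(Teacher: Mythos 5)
Your proposal is correct, but it takes a genuinely different route from the paper's. The paper argues through recurrence: it cites that every element of $\mathcal{R}$ is recurrent (Proposition~3.1 of \cite{PerkinsKayll2016}) and the classical fact that each firing-equivalence class contains \emph{exactly one} recurrent configuration; since $|\mathcal{R}|=\tau(G^*)=|K(G^*)|$, the set $\mathcal{R}$ must then coincide with the full set of recurrent configurations, which is a transversal. You never invoke recurrence. Instead you prove surjectivity of $\mathcal{R}\to K(G^*)$ by hand: the all-ones shift $-\sum_{v\in V}\Delta_v$ (valid precisely because the \emph{reduced} Laplacian of $G^*$ has row sums $-1$, as you correctly note) pushes any $C\in\mathbb{Z}^V$ into a supercritical configuration in the same coset, relaxation terminates (chips are burned) and is confluent (Lemma~\ref{lem-2.1}), and reversing the firing sequence certifies legality of the relaxed result; injectivity then follows from the same cardinality count $|\mathcal{R}|=\tau(G^*)=\det\Delta=|K(G^*)|$. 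Both proofs hinge on the equicardinality, but they use it on opposite sides: the paper gets ``at most one representative per coset'' from the one-recurrent-per-class theorem and needs counting to see that $\mathcal{R}$ exhausts the classes, whereas you get ``at least one per coset'' constructively and need counting only to rule out collisions. Your version is more self-contained, replacing two citations to the sandpile literature with an explicit construction; the paper's version is shorter given what it cites and yields as a byproduct the identification of $\mathcal{R}$ with Dhar's recurrent configurations, which your argument does not directly provide. The only cosmetic quibble is that termination of relaxation comes from the chip-burning/finiteness observation rather than from Lemma~\ref{lem-2.1} itself, which supplies only the order-independence.
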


\begin{proof}
First note that  both of $\mathcal{R}$, $K(G^*)$ contain $\tau(G^*)$
elements. For $\mathcal{R}$ , this is (\ref{main-RS}) (our
Theorem~\ref{thm-R-is-S}) and for $K(G^*)$, this is also well known (see,
e.g., \cite{Levine-Propp2010}). Furthermore, members of 
$\mathcal{R}$ are all recurrent configurations, a fact we proved in
\cite[Proposition~3.1]{PerkinsKayll2016} 
(though it was known much earlier in the sandpile literature; cf.\,\cite{Dhar90}). 
Now each equivalence class
of $\mathbb{Z}^{V}$ (under the firing equivalence) contains exactly
one recurrent configuration (see \cite{Levine-Propp2010} again, or,
e.g., \cite{Holroyd-et-al2008}). So we have $\tau(G^*)$ recurrent
configurations (in $\mathcal{R}$) and the same number of recurrent
configurations appearing among the elements of $K(G^*)$
(i.e., among the the equivalence classes of
$\mathbb{Z}^{V}$), the latter being exhaustive. Therefore,
$\mathcal{R}$ must be the set of all recurrent configurations.
\end{proof}

Proposition~\ref{R-is-sandpile_grp} is not new. Indeed, it recasts the
modern definition of $K(G^*)$ above in terms of the original
definition due to Dhar~\cite{Dhar90}. Nevertheless, it's striking to 
observe the central role that enumeration plays in its proof.

\section{Supporting results}
\label{Sec:background_results}

In Section~\ref{sect-description}, we glossed over whether the length of a
burn-off game is well defined. The following early chip-firing result
settles this question and shows that the relaxation of a configuration
is uniquely determined. 

\begin{lem}[\cite{Dhar90},\cite{DiaconisFulton91}]
\label{lem-2.1}
In a burn-off game on a general graph, the vertices can be fired in
any order without affecting the length or final configuration of the
game. 
\end{lem}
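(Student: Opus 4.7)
The plan is to establish the classical ``least-action'' principle for chip firing. The starting observation, immediate from (\ref{fire-two}), is that if $\sigma$ is a firing sequence executable from a configuration $C$, then the chip count at each vertex $w$ afterwards satisfies
\begin{equation*}
C_\sigma(w) = C(w) + \sum_{v\sim w} n_v(\sigma) - n_w(\sigma)\bigl(\deg_G(w)+1\bigr),
\end{equation*}
where $n_v(\sigma)$ denotes the number of firings of $v$ in $\sigma$. In particular, $C_\sigma$ depends on $\sigma$ only through the firing multiset $\bigl(n_v(\sigma)\bigr)_{v\in V}$, not on the firing order.

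The central claim I would prove is: if $\sigma$ is a firing sequence terminating in a relaxed configuration and $\tau$ is any other legal firing sequence from $C$, then $n_v(\tau)\le n_v(\sigma)$ for every vertex $v$. Proceed by induction on $|\tau|$, with the empty case trivial. For the inductive step, suppose $\tau=\tau' u$ is legal and that $n_v(\tau')\le n_v(\sigma)$ for every $v$; it suffices to show $n_u(\tau')<n_u(\sigma)$. If instead $n_u(\tau')\ge n_u(\sigma)$, then applying the displayed formula at $w=u$ to both $\tau'$ and $\sigma$ and subtracting gives
\begin{equation*}
C_{\tau'}(u) - C_\sigma(u) = \sum_{v\sim u}\bigl(n_v(\tau')-n_v(\sigma)\bigr) - \bigl(n_u(\tau')-n_u(\sigma)\bigr)\bigl(\deg_G(u)+1\bigr) \le 0,
\end{equation*}
each term on the right being non-positive. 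Hence $C_{\tau'}(u)\le C_\sigma(u)\le\deg_G(u)$, contradicting that $u$ is supercritical at $C_{\tau'}$.

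With the claim in hand, the theorem follows quickly. If $\sigma$ and $\sigma'$ are both firing sequences from $C$ terminating in relaxed configurations, applying the claim in both directions yields $n_v(\sigma)=n_v(\sigma')$ for every $v$; summing over $v$ gives equal lengths, and the chip-count formula then shows that the two relaxations coincide.

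The main hurdle is the inductive step---specifically, the sign analysis that yields $C_{\tau'}(u)\le C_\sigma(u)$ under the inductive hypothesis together with the hypothesis $n_u(\tau')\ge n_u(\sigma)$. Once the signs are carefully tracked, the remainder is routine bookkeeping. I should note that the argument uses only the local firing rule and the monotonicity embodied in the chip-count formula, so connectivity of $G$ is not needed---the abelian property is a purely combinatorial fact about general graphs.
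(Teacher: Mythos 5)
Your proof is correct. Note, however, that the paper does not actually prove Lemma~\ref{lem-2.1}: it attributes the result to Dhar and to Diaconis--Fulton and points the reader to the literature (singling out the survey of Holroyd et al.\ for a ``particularly succinct proof''), so there is no in-paper argument to compare against. What you have written is the standard least-action/abelian-property argument that those references use: the chip-count formula shows the final configuration depends only on the firing vector $(n_v)_{v\in V}$, and the inductive claim that any legal sequence $\tau$ satisfies $n_v(\tau)\le n_v(\sigma)$ for a relaxed-terminating $\sigma$ forces all relaxed-terminating sequences to share the same firing vector. Your sign analysis in the inductive step is right: under the hypotheses $n_v(\tau')\le n_v(\sigma)$ for all $v$ and $n_u(\tau')\ge n_u(\sigma)$, both terms in the displayed difference are non-positive, giving $C_{\tau'}(u)\le C_\sigma(u)\le \deg_G(u)$ and contradicting the legality of firing $u$ next. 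The only point worth making explicit is that a relaxed-terminating sequence $\sigma$ exists and that every legal sequence is finite (so can be extended to a terminating one); this follows because each firing burns a chip, a fact the paper records separately just after the lemma. Your closing remark that connectivity is not needed is also consistent with the paper's statement for general graphs.
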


\noindent
Lemma~\ref{lem-2.1} has appeared in several other places, including 
\cite{bjorner1991}, \cite{Holroyd-et-al2008}, and \cite{perkins2005},
the second of these containing a particularly succinct proof. 

Because our graphs are finite and a chip is burned during every
vertex-firing, burn-off games of infinite length are
impossible. Within the general chip-firing literature, finding
non-trivial bounds for the game length has been 
tackled more than once; see, e.g., \cite{Tardos88} and
\cite{vandenHeuvel2001}. 
For our purposes, we shall need the following
elementary result.

\begin{lem}
\label{lem-4.8}
During a burn-off game that starts with a relaxed legal configuration,
no vertex fires more than once.  
\end{lem}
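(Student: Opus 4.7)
The plan is a short proof by contradiction, driven by a direct chip-count at the would-be offender. First, I would fix any valid sequence of firings realizing the relaxation; such a sequence exists, with a well-defined firing count per vertex, by Lemma~\ref{lem-2.1}. Suppose for a contradiction that at least one vertex fires at least twice during the game, and among all such vertices let $u_1$ be the one whose \emph{second} firing occurs earliest in the chosen sequence. By the minimality of that second firing, every vertex---and in particular each of $u_1$'s $\deg_G(u_1)$ neighbors---has fired at most once in the interim.

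Next, I would bound the chip count at $u_1$ just before the hypothetical second firing by accounting for every source of change since the start of the game: the initial load $C(u_1)$; at most one chip deposited by the seeding step (with equality iff $u_1$ is itself the seed); at most $\deg_G(u_1)$ chips contributed by neighbor firings (one from each distinct neighbor); and a loss of $\deg_G(u_1)+1$ chips from $u_1$'s own first firing. Summing, the count is at most
\[
C(u_1) + 1 + \deg_G(u_1) - (\deg_G(u_1)+1) \;=\; C(u_1) \;\leq\; \deg_G(u_1),
\]
where the final inequality uses that $C$ is relaxed.

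Finally, because $u_1$ now carries at most $\deg_G(u_1)$ chips, it is not supercritical and therefore cannot fire, contradicting the assumption. I do not anticipate any serious obstacle: the whole proof reduces to a one-line inequality evaluated at the first potential offender. A minor curiosity worth flagging is that \emph{legality} of $C$ plays no role in this deduction---relaxedness alone underlies the bound---even though the lemma is stated in the richer legal setting that the rest of the paper requires.
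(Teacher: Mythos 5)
Your proof is correct: taking the first vertex $u_1$ whose second firing occurs, every neighbor has fired at most once by that moment, so the chip count at $u_1$ is at most $C(u_1)+1+\deg_G(u_1)-(\deg_G(u_1)+1)=C(u_1)\leq\deg_G(u_1)$, which bars a second firing; your side remark that only relaxedness (not legality) is used is also accurate. The present paper states Lemma~\ref{lem-4.8} without proof, deferring to \cite{Dhar90}, \cite{CoriRossin2000}, and \cite{PerkinsKayll2016}, and the argument given in those sources is this same ``first offender'' chip count, so your approach matches the intended one.
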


\noindent
In the sandpile literature, Lemma~\ref{lem-4.8} originated in
\cite{Dhar90} as elucidated in \cite{CoriRossin2000}. Before we became
aware of its earlier existence, the second author of the present work
included it in his dissertation~\cite{perkins2005} and we included a proof
in \cite{PerkinsKayll2016}.

Our last three tools concern legal configurations.  They appeared
in \cite{perkins2005}, followed by published
proofs in \cite{KayllPerkins2013}.  Likewise with Lemma~\ref{lem-4.8},
their versions in the sandpile literature predate these citations; for example,
the first tool---Lemma~\ref{lem-2.4}---follows from the correctness of
Algorithm~\ref{alg-2.6} so dates to \cite{Dhar90}.
It characterizes the
relaxed legal configurations on general graphs $G$. In its statement,
$N_G$ denotes the `earlier neighbor' set; i.e., given an ordering
$(w_1,\ldots,w_n)$ of $V(G)$, we define $N_G(w_i):=\{w_j\colon w_{i}w_{j}
\in E(G) \text{ and } j<i\}$.

\begin{lem}
\label{lem-2.4}
A relaxed configuration $C\colon V\to\mathbb{N}$ is legal if and only
if it is possible to relabel $V$ as $w_1,\ldots,w_n$ so that 
\begin{equation}
\label{eqn-PropertyP}
C(w_i) \geq |N_G(w_i)| \text{ for } 1 \leq i \leq n.
\end{equation}
\end{lem}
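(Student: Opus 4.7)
The plan is to prove both directions of the biconditional. The forward direction---legality implies the existence of the labeling---reduces to a combinatorial reading of any reverse-firing sequence certifying legality, while the reverse direction requires an explicit construction of such a sequence.

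For $(\Rightarrow)$, I would fix any reverse-firing sequence taking $C$ to some supercritical configuration $S$. A preliminary observation is that every $v\in V$ must be reverse-fired at least once in this sequence; otherwise $S(v)\le C(v)\le\deg_G(v)$ would contradict supercriticality. Letting $t_v$ denote the first time $v$ is reverse-fired, I label $V$ as $(w_1,\ldots,w_n)$ in increasing order of $t_v$. To verify that $C(w_i)\ge|N_G(w_i)|$, I examine $w_i$'s chip count at the step just before its first reverse-firing: this count is a nonnegative integer, and it equals $C(w_i)$ minus the total deductions inflicted by prior reverse-firings of its neighbors. By construction, the only vertices reverse-fired by that moment are $w_1,\ldots,w_{i-1}$, each at least once, so each such vertex adjacent to $w_i$ contributes at least one deduction. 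Nonnegativity then gives the stated inequality.

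For $(\Leftarrow)$, given such a labeling I would reverse-fire $w_1,w_2,\ldots,w_n$ in order and iterate this \emph{pass} a total of $p$ times. One pass has net effect $+1$ chip at every vertex (a gain of $\deg_G+1$ from its own reverse-firing, and a loss of one per neighbor's reverse-firing), so after $p$ passes the count at $v$ is $C(v)+p$, which exceeds $\deg_G(v)$ once $p>\deg_G(v)-C(v)$; choosing $p=n$ makes every vertex supercritical. The technical content is checking that every individual reverse-firing in this schedule is legal. At the moment within pass $p$ just before reverse-firing $w_i$, a direct count shows that each neighbor $w_j$ of $w_i$ with $j<i$ enjoys a surplus of $\deg_G(w_j)+1$ from $w_j$'s own reverse-firing earlier in the current pass, yielding a current count of at least $p$. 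For $j>i$, the key ingredient is that $w_i$ itself lies in $N_G(w_j)$, so the hypothesis sharpens to $C(w_j)\ge|\{l<i\colon w_l\sim w_j\}|+1$, and this extra $+1$ is exactly enough to again conclude a count of at least $p$ at $w_j$. I expect this $j>i$ validity check to be the only step requiring more than routine accounting.
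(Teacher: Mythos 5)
Your proof is correct, and it is worth noting that the paper itself does not prove Lemma~2.4: it cites the published proof in the authors' earlier work and remarks that the lemma follows from the correctness of Dhar's burning algorithm (Algorithm~2.6), under which the labeling $w_1,\ldots,w_n$ is just the reverse of a valid deletion order. Your argument instead works directly from the definition of legality via reverse-firing sequences, and both halves hold up. In the forward direction, the observations that every vertex must be reverse-fired at least once (else $S(v)\le C(v)\le\deg_G(v)$, using relaxedness), that only the neighbors $w_j$ with $t_{w_j}<t_{w_i}$ --- i.e.\ exactly those in $N_G(w_i)$ --- can have deducted chips from $w_i$ before its first reverse-firing, and that the intermediate configurations are $\mathbb{N}$-valued, combine exactly as you say to give $C(w_i)\ge|N_G(w_i)|$. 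In the backward direction, the repeated-pass schedule is a clean explicit certificate: after $q$ completed passes each vertex holds $C(v)+q$ chips, so $n$ passes suffice for supercriticality, and your validity check is right, including the one genuinely delicate case --- a neighbor $w_j$ of $w_i$ with $j>i$ mid-pass has count $C(w_j)+(q-1)-|\{l<i\colon w_l\sim w_j\}|$, and since $w_i$ itself belongs to $N_G(w_j)$ the hypothesis gives $C(w_j)\ge|\{l<i\colon w_l\sim w_j\}|+1$, whence the count is at least $q\ge 1$. What your route buys is self-containedness (no appeal to the burning algorithm or to prior literature); what the paper's route buys is economy, since Algorithm~2.6 is needed elsewhere anyway and the lemma drops out of its correctness for free.
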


The following basic result establishes that containing a
legal configuration is an inherited property for graphs;
see \cite{KayllPerkins2013} for one published proof.
 
\begin{lem}
\label{lem-2.8}
For a configuration $C\colon V(G)\to\mathbb{N}$ and a subgraph $H$ of
$G$, if $C$ is legal on $G$, then $C|_{V(H)}$ is legal on $H$.
\end{lem}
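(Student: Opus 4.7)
The plan is to invoke Lemma~\ref{lem-2.4}'s ordering characterization twice---first on $G$ to extract an ordering witnessing the legality of $C$, then on $H$ with the restriction of that ordering to conclude legality of $C|_{V(H)}$. The bridge between the two invocations is the subgraph containment $E(H) \subseteq E(G)$, which forces $N_H(w_{i_j}) \subseteq N_G(w_{i_j})$ for any induced enumeration $(w_{i_1},\ldots,w_{i_m})$ of $V(H)$ coming from a labeling of $V(G)$.

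Concretely, suppose $C$ is legal on $G$. By Lemma~\ref{lem-2.4}, there is a labeling $(w_1,\ldots,w_n)$ of $V(G)$ satisfying $C(w_i) \geq |N_G(w_i)|$ for each $i$. Passing to the induced sub-sequence $(w_{i_1},\ldots,w_{i_m})$ on $V(H)$ with $i_1<\cdots<i_m$, every earlier $H$-neighbor of $w_{i_j}$ is also an earlier $G$-neighbor (since edges of $H$ are edges of $G$ and the enumeration of $V(H)$ inherits its order from that of $V(G)$), so
\[
C|_{V(H)}(w_{i_j}) \;=\; C(w_{i_j}) \;\geq\; |N_G(w_{i_j})| \;\geq\; |N_H(w_{i_j})|
\]
for each $j\in\{1,\ldots,m\}$. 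A second application of Lemma~\ref{lem-2.4}, now on $H$ with this restricted ordering, then certifies that $C|_{V(H)}$ is legal on $H$.

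The main obstacle is that Lemma~\ref{lem-2.4} is literally stated for relaxed configurations, whereas $C|_{V(H)}$ need not be relaxed on $H$ even when $C$ is relaxed on $G$: the inequality $\deg_H(v) \leq \deg_G(v)$ can push a vertex critical on $G$ into supercritical status on $H$. The simplest fix is to observe that the ``ordering implies legal'' direction of Lemma~\ref{lem-2.4} is essentially independent of relaxedness, as the reverse-firing construction underlying its proof only requires the ordering bound $C(w_i)\geq|N_G(w_i)|$ at each vertex. An alternative route is to first fire the supercritical vertices of $C|_{V(H)}$ on $H$ to reach the unique (by Lemma~\ref{lem-2.1}) relaxation $C^\ast$ of $C|_{V(H)}$, verify that $C^\ast$ still satisfies a corresponding ordering condition on $H$ (possibly after a re-sort that moves any newly depleted vertex to an earlier position, where its smaller earlier-neighbor count accommodates its reduced chip count), apply Lemma~\ref{lem-2.4} to certify $C^\ast$ legal, and transfer legality back to $C|_{V(H)}$ via the confluence (abelian) properties that underlie Lemma~\ref{lem-2.1}.
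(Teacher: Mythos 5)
The paper itself does not prove Lemma~\ref{lem-2.8}; it defers to the published proof in the earlier reference. Your main line of argument---extract an ordering $(w_1,\ldots,w_n)$ witnessing legality of $C$ on $G$, pass to the induced subsequence on $V(H)$, and use $N_H(w_{i_j})\subseteq N_G(w_{i_j})$ to verify the same ordering condition on $H$---is correct and is the standard argument (equivalently phrased: a successful deletion order for Algorithm~\ref{alg-2.6} on $(G,C)$ restricts to a successful deletion order on $(H,C|_{V(H)})$, since deleting fewer neighbors can only leave $\deg_{\widehat H}$ no larger than $\deg_{\widehat G}$). You are also right to flag that Lemma~\ref{lem-2.4} is stated only for relaxed configurations, and your first fix is the correct one: the ``ordering $\Rightarrow$ legal'' direction never uses relaxedness (reverse-firing $w_1,w_2,\ldots,w_n$ repeatedly works verbatim), so it applies to $C|_{V(H)}$ even when that restriction is supercritical somewhere. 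Note that the same caveat applies to your \emph{first} invocation of Lemma~\ref{lem-2.4}, on $G$: the lemma as stated allows an arbitrary configuration $C$, not a relaxed one, so you should either add the hypothesis that $C$ is relaxed (which covers every use of Lemma~\ref{lem-2.8} in this paper) or observe that the ``legal $\Rightarrow$ ordering'' direction likewise needs no relaxedness, since it is just the reversed deletion order of Algorithm~\ref{alg-2.6}, whose input is an arbitrary configuration.

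Your second, ``alternative route'' should be deleted: its final step---transferring legality of the relaxation $C^\ast$ back to $C|_{V(H)}$---is false in general. Legality does not pull back along relaxation: on $K_3$ with vertices $a,b,c$, the configuration $(6,0,0)$ is not legal (no vertex can be reverse-fired, since every vertex has a neighbor with zero chips), yet it relaxes by firing $a$ twice to $(0,2,2)$, which is legal via the ordering $a,b,c$. So ``$C^\ast$ legal'' gives no information about $C|_{V(H)}$, and the vague ``re-sort'' step would be patching the wrong gap. Since your first fix already closes the argument cleanly, the lemma stands on that route alone.
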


We close this section by recalling an algorithm for determining the
legality of a given configuration. The version stated here is from 
\cite{KayllPerkins2013}---a published account from 
\cite{perkins2005}---but it's essentially Dhar's `Burning
Algorithm' from \cite{Dhar90}; see also \cite{CoriRossin2000}.
The proofs of Theorems~\ref{thm-R-is-S} and \ref{thm-4.9} use this
algorithm repeatedly. 

\begin{alg}
\label{alg-2.6}
\end{alg}
\begin{center}	
	\begin{tabular}{|r p{4.5in}|}
    \hline
        \rule[-0.75em]{0pt}{2em}\textsc{Input:} & a graph $G=(V,E)$
        and a chip configuration $C\colon V \to \mathbb{N}$ on $G$ \\
        \textsc{Output:} & an answer to the question  `Is $C$
        legal?' \\
   \hline
            & \\[-0.5em]
	(1) & Let $\widehat{G} = G$. \\
	(2) & If $C(v)<\deg_{\widehat{G}}(v)$ for all $v \in V(\widehat{G})$, then
            stop; output `No'.\\ 
	(3) & Choose any $v \in V(\widehat{G})$ with
            $C(v)\geq\deg_{\widehat{G}}(v)$. \\ 
	(4) & Delete $v$ and all incident edges from $\widehat{G}$ to
        create a graph $G^-$. \\
	(5) & If $V(G^-)=\varnothing$, then stop; output `Yes'.\\
	(6) & Let $\widehat{G}=G^-$ and go to step 2. \\[0.5em]
	\hline
	\end{tabular}
  \end{center}

\bigskip

\section{Enumerating relaxed legal configurations}
\label{Sec-counting-R}

Here we present our proof of (\ref{main-RS}).  For a graph $G$, recall
that the \DEF{cone} $G^*$ is obtained from $G$ by adding a new vertex
$x$ adjacent to every vertex of $G$. This derived graph is sometimes
called the `suspension' of $G$ over $x$, but we shall not use this
term.  The reader should keep in mind the special role that the symbol
`$x$' plays in this section and the next.

\begin{thm}
\label{thm-R-is-S}
The number of relaxed legal configurations on $G$ is the number of
spanning trees of $G^*$. 
\end{thm}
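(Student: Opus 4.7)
The plan is to construct an explicit bijection $\Phi\colon\mathcal{R}(G)\to\mathcal{S}(G^*)$ by a deterministic run of Algorithm~\ref{alg-2.6}. Fix a total order $\prec$ on $V(G)$ and modify step~(3) to always choose the $\prec$-smallest burnable vertex. By Lemma~\ref{lem-2.4}, for every $C\in\mathcal{R}$ this run terminates with a unique \emph{burning order} $b_1,\ldots,b_n$. For each $v\in V(G)$, assign a parent $p(v)\in V(G^*)\setminus\{v\}$ as follows: if $v$ is \emph{initially burnable}, i.e.\ $C(v)=\deg_G(v)$, set $p(v):=x$; otherwise $d(v):=\deg_G(v)-C(v)\ge 1$ counts the neighbors whose firing must precede $v$'s, and we set $p(v):=b_{j'}$, where $j'$ is the first step at which $v$'s burned-neighbor tally reaches $d(v)$. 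Since only a neighbor's firing can increment that tally, $b_{j'}\in N_G(v)$, and plainly $b_{j'}$ strictly precedes $v$ in the order. The subgraph $\Phi(C):=\{vp(v):v\in V(G)\}$ is then an $n$-edge acyclic subgraph of $G^*$ covering $V(G^*)$---a spanning tree rooted at $x$.

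For the inverse $\Psi\colon\mathcal{S}\to\mathcal{R}$, given $T\in\mathcal{S}$, I extract an order greedily: for $i=1,\ldots,n$, let $b_i$ be the $\prec$-smallest vertex in $V(G)\setminus\{b_1,\ldots,b_{i-1}\}$ whose $T$-parent lies in $\{x,b_1,\ldots,b_{i-1}\}$ (such a vertex exists since $T$ is a connected tree rooted at $x$). Then define $\Psi(T)(v):=\deg_G(v)$ when $p_T(v)=x$, and $\Psi(T)(v):=\deg_G(v)-|\{i\le j':b_i\sim_G v\}|$ when $p_T(v)=b_{j'}\in V(G)$. Correctness rests on a \emph{matching lemma}: whenever $T=\Phi(C)$ and $b_1,\ldots,b_n$ is the forward burning order, a vertex $v$ is burnable at step $i$ if and only if $p_T(v)\in\{x,b_1,\ldots,b_{i-1}\}$. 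Granting this, the shared $\prec$-tiebreaking makes a short induction show that the greedy extraction reproduces the forward burning order, so $\Psi(\Phi(C))=C$; the converse $\Phi(\Psi(T))=T$ follows because, for $C:=\Psi(T)$, each $v$'s critical step matches the index of $p_T(v)$, while relaxedness and legality of $\Psi(T)$ (the latter via Lemma~\ref{lem-2.4} applied to the reverse of the extracted order) fall directly out of the construction.

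The main obstacle is proving the matching lemma. In the forward direction, if $v$ is burnable at step $i$, then $d(v)\le E_v^{(i-1)}$, where $E_v^{(i-1)}$ counts $v$'s neighbors among $b_1,\ldots,b_{i-1}$; since $E_v^{(\cdot)}$ is non-decreasing in the step index, the critical step $j'$ at which $E_v$ first reaches $d(v)$ satisfies $j'\le i-1$, placing $p_T(v)=b_{j'}\in\{b_1,\ldots,b_{i-1}\}$ (with $p_T(v)=x$ covering the boundary case $d(v)=0$). Conversely, if $p_T(v)=x$ then $v$ is initially burnable and hence burnable at every step; and if $p_T(v)=b_{j'}$ with $j'\le i-1$, then $E_v^{(j')}=d(v)$ by definition of $j'$, so $E_v^{(i-1)}\ge d(v)$, making $v$ burnable at step $i$. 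With this equivalence secured, the remaining bookkeeping to confirm the two inverse identities proceeds by routine index-chasing.
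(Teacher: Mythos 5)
Your construction is correct, and while its combinatorial heart is the same as the paper's---in both arguments the spanning tree of $G^*$ records, for each vertex $v$, which neighbor's burning first made $v$ burnable, and the chip count is recovered as $\deg_G(v)$ minus the rank of the tree-parent among $v$'s earlier-processed neighbors---your route to the count is genuinely different in structure. The paper organizes the correspondence by BFS-style layers $M_1,M_2,\ldots$ (Algorithms~\ref{alg-4.2} and \ref{alg-4.3}), producing two maps $A\colon\mathcal{R}\to\mathcal{S}$ and $B\colon\mathcal{S}\to\mathcal{R}$ whose injectivity is proved \emph{separately} (the harder direction requiring the inductive Lemma~\ref{lem-4.4}); equicardinality then follows from having two injections between finite sets, with no claim that $A$ and $B$ are mutually inverse. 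You instead linearize the burning process with a fixed tie-breaking order $\prec$, obtain a single map $\Phi$ with an explicit two-sided inverse $\Psi$, and your matching lemma does the work of both of the paper's injectivity arguments at once. This is essentially the classical burning bijection of Dhar and Cori--Le~Borgne; it is leaner, and it handles disconnected $G$ directly, whereas the paper first reduces to connected $G$ via a product over components. What the paper's layered formulation buys is downstream reuse: the observation that $xv\in T^*$ exactly when $v$ is critical, and the layer-by-layer chip assignment, are precisely the ingredients of Proposition~\ref{prp-4.7} and Theorem~\ref{thm-4.9}.

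Two points need tightening. First, you need that the $\prec$-greedy run of Algorithm~\ref{alg-2.6} on a legal configuration exhausts $V$; Lemma~\ref{lem-2.4} only guarantees that \emph{some} deletion order succeeds, so you should invoke the order-independence of the burning algorithm (i.e., the correctness of Algorithm~\ref{alg-2.6} with arbitrary choices in step~(3)). Second, your matching lemma is stated for $T=\Phi(C)$ with the forward burning order, which suffices for $\Psi\circ\Phi=\mathrm{id}$; for $\Phi\circ\Psi=\mathrm{id}$ you need the analogous equivalence where $b_1,\ldots,b_n$ is the order extracted from an arbitrary $T$ and $C=\Psi(T)$, proved by a simultaneous induction showing that the forward burning of $\Psi(T)$ reproduces the extracted order step by step---the key computation being that a vertex $v$ with $p_T(v)=b_{j'}$ has burned-neighbor tally strictly below $d(v)$ before step $j'$ completes, because $b_{j'}$ is itself a $G$-neighbor of $v$. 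The index-chasing does go through, but it is not quite a corollary of the lemma as you stated it.
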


\begin{proof}
We may assume that $G$ is connected, for if $H_1,\ldots,H_k$ are the
components of $G$, then---once we have $|\mathcal{R}(H_i)|=\tau(H_i^*)$
for $1\leq i\leq k$ (i.e., once we have the theorem for connected
graphs)---we obtain
\[
|\mathcal{R}(G)| = \prod\sb{i=1}\sp{k}|\mathcal{R}(H_i)|=
\prod\sb{i=1}\sp{k}\tau(H_i^*)=\tau(G^*),
\]
which is the theorem for general graphs.

Given a connected graph $G$,   
we establish algorithmically injections back and forth between
$\mathcal{R}$ and the set $\mathcal{S}$ of spanning trees
of $G^*$. Define
$A\colon\mathcal{R}\to\mathcal{S}$ via Algorithm~\ref{alg-4.2} below and
$B\colon\mathcal{S}\to\mathcal{R}$ via Algorithm~\ref{alg-4.3} below. 

\begin{alg}
\label{alg-4.2}
\end{alg}
  \begin{center}	
	\begin{tabular}{|r p{4.5in}|}
    \hline
        \rule[-0.75em]{0pt}{2em}\textsc{Input:} & a connected graph
        $G=(V,E)$ with $V = \{v_1,v_2,\ldots,v_n\}$ and a
        configuration $C\in\mathcal{R}$ \\  
        \textsc{Output:} & a spanning tree $A(C)=T^*$ of $G^*$ \\
        \hline
            & \\[-0.5em]
	(0) & Let $T^*$ be the subgraph of $G^*$ with $V(T^*)=\{x\}$,
        $E(T^*)=\varnothing$. \\ 
	(1) & Let $i=1$. \\
	(2) & Let $M_1$ be the sequence (in increasing subscript
        order) of vertices $v_k$ such that $C(v_k)=\deg_G(v_k)$; let
        $\overline{M}_1 = \{x\colon x \text{ is an entry of } M_1\}$. \\ 
	(3) & For each $v_k\in\overline{M}_1$, add $v_k$ to $V(T^*)$
        and $\{x,v_k\}$ to $E(T^*)$; if $V(T^*)=V$, then stop. \\ 
	(4) & $i \mapsfrom i+1$. \\
	(5) & Let $M_i$ be the sequence (in increasing subscript
        order) of the vertices not yet included in $V(T^*)$ that are
        neighbors of vertices in $M_{i-1}$;  let
        $\overline{M}_i=\{x\colon x \text{ is an entry in } M_i\}$. \\[0.5em]  
	& For each $u \in \overline{M}_i$, execute steps (6) through
        (9): \\[0.5em] 
	(6) & For $r=1,2,\ldots,i-1$, let
        $N_r=(v_{r,1},v_{r,2},\ldots,v_{r,k_r})$ be the sequence (in
        increasing subscript order) of the $k_r$ $G$-neighbors of $u$
        that appear in $\overline{M}_r$;  let
        $\overline{N}_r=\{x\colon x \text{ is an entry in } N_r\}$. \\ 
	(7) & Let $s=\left|\bigcup_{r=1}^{i-1}{\overline{N}_r}\right|$
        and $N=(v_{\ell_1},v_{\ell_2},\ldots,v_{\ell_s})$ be the sequence
        determined by concatenating the sequences
        $N_1,N_2,\ldots,N_{i-1}$. \\ 
	(8) & If $C(u)<\deg_G(u)-s$, then delete $u$ from $M_i$ and
        $\overline{M}_i$. \\ 
	(9) & Otherwise, $C(u)=\deg_G(u)-j$ for some $j$ with
        $1\leq j\leq s$; add $u$ to $V(T^*)$ and $\{u,v_{\ell_j}\}$ to
        $E(T^*)$. \\[0.5em] 
	(10) & If $V(T^*)=V$, then stop;  otherwise, go to step (4). \\[0.5em] 
	\hline
	\end{tabular}
  \end{center}

\bigskip
\noindent
\textit{Proof that $A$ is well-defined.} Not only must we be sure that
Algorithm~\ref{alg-4.2} outputs a spanning tree $T^*$, but also we
must check that it does not halt before doing so. To establish both of
these results, we look at each step in turn. 

\textit{Step} (2). By Algorithm~\ref{alg-2.6}, we know that at least
one vertex in a legal configuration contains at least as many chips as
its degree. Thus $\overline{M}_1$ is not empty. 

\textit{Step} (3). It is clear that $T^*$ is thus far a tree; in fact,
it is a star. 

\textit{Step} (5). We must establish that $\overline{M}_i$ is nonempty
so that the ``for each $u \in \overline{M}_i$'' instruction is
not quantifying over an empty set. We proceed by induction. In the
discussion of Step (2) above, we observed that $\overline{M}_1$ is
nonempty. By construction, all vertices in $\overline{M}_1$ are
critical. Because $C$ is a legal configuration, we may apply
Algorithm~\ref{alg-2.6} to $G$ and delete all of the vertices (in any
order) in $\overline{M}_1$. 

With these statements as our base case, our induction hypothesis is in
two parts: for fixed $i > 1$, suppose that (a)
$\overline{M}_1,\overline{M}_2,\ldots,\overline{M}_{i-1}$ are
nonempty; and (b) we may apply Algorithm~\ref{alg-2.6} to $G$ and
delete the vertices in
$\overline{M}_1,\overline{M}_2,\ldots,\overline{M}_{i-1}$ without
halting. 

Let $M = \bigcup_{j=1}^{i-1}{\overline{M}_j}$. Lemma~\ref{lem-2.8}
states that the configuration on any subgraph of a graph (on which we
have a legal configuration) must itself be legal. So, if our
application of Algorithm~\ref{alg-2.6} has deleted exactly the vertices
of $M$, then at least one of the remaining vertices $u$ of $G-M$ must be
critical in $G-M$. Suppose that $u$ is not a neighbor of any vertex in
$M$. Because $u$ is critical in $G-M$, and none of its neighbors have
been deleted in our application of Algorithm~\ref{alg-2.6}, we see
that $u$ is also critical in $G$. But this places $u$ in
$\overline{M}_1$, which contradicts the choice of $u$ in $G-M$. 

Thus, we know that $u$ is a neighbor of some vertex in $M$. Now if $u$
is not a neighbor of a vertex in $\overline{M}_{i-1}$, it must be
adjacent to, say, $s \geq 1$ vertices in
$\overline{M}_1,\overline{M}_2,\ldots,\overline{M}_{i-2}$. Thus, $u$
has been considered previously by step (8) and has been deleted each
time. Therefore, $C(u)<\deg_G(u)-s$. This shows (back in our
application of Algorithm~\ref{alg-2.6}) that if we have deleted all of
the vertices in
$\overline{M}_1,\overline{M}_2,\ldots,\overline{M}_{i-1}$, including
the $s$ neighbors of $u$, then $u$ will not be critical in $G-M$. This
contradicts the fact that $u$ is critical in $G-M$, so $u$ must be a
neighbor of a vertex in $\overline{M}_{i-1}$. 

Because $u$ is critical in $G-M$, step (8) will not delete $u$ from
$\overline{M}_i$. Thus, $\overline{M}_i$ is nonempty; this fulfills
part (a) of the induction hypothesis. We claim that any vertex $w$
placed in $\overline{M}_i$ by step (5) will survive past step (8) only
if it, too, is critical in $G-M$. For $w$ to survive step (8), we
require that $C(w)\geq\deg_G(w)-s$, where $s$ is the number of
$G$-neighbors of $w$ that appear in $M$. Since $\deg_G(w)-s$ simply
equals $\deg_{G-M}(w)$, we know that $w$ is critical in $G-M$. Thus, all
vertices in $\overline{M}_i$ can be deleted as we apply
Algorithm~\ref{alg-2.6}. This fulfills part (b) of the induction
hypothesis.  

\textit{Step} (6). Step (5) ensures that these neighbors exist.

\textit{Step} (8). The argument given above for step (5) ensures
that $\overline{M}_i$ remains nonempty after all vertices of
$\overline{M}_i$ have been processed in step (8). 

\textit{Step} (9). It is impossible to create a cycle in this step
because step (5) only considers those vertices that are not yet part of
$V(T^*)$. 

\textit{Step} (10). This step ensures that $T^*$ will be a spanning 
tree of $G^*$. 

Observe that step (9) adds at least one edge to $T^*$ since
$\overline{M}_i$ remains nonempty. Once $n-1$ edges have been added to
$T^*$, step (10) will halt the algorithm. Since Algorithm~\ref{alg-4.2}
does not halt
until it outputs a spanning tree $T^*$, the function $A$ is
well-defined. \hfill\qedsymbol 

\medskip
\noindent
\textit{Proof that $A$ is an injection.} Let $C\in\mathcal{R}$ and
$C'\in\mathcal{R}$ be two distinct relaxed legal configurations on
$G$. We prove that $A$ is an injection by showing that the spanning
trees $A(C)$ and $A(C')$ must be distinct. As Algorithm~\ref{alg-4.2}
operates on $C$ and $C'$, it must encounter a vertex $v$ for which
$C(v)\neq C'(v)$. Step (8) might remove $v$ from consideration; if
this occurs for both inputs $C$ and $C'$, then we consider a future
pass of the algorithm. Because Algorithm~\ref{alg-4.2} includes every
vertex in the output before it halts, we know that eventually we will
find a vertex $v$ for which $C(v)\neq C'(v)$ that is not removed by
step (8) concurrently for both inputs $C$ and $C'$.

Now if $v$ is removed by step (8) for one input but not the other,
then step (9) will connect $v$ to a different neighbor for the two
inputs. On the other hand, suppose that $v$ is not removed by step (8) for
either input; because $C(v)\neq C'(v)$, step (9) will connect $v$
to a different neighbor for the two inputs. In either case, $A(C)$ and
$A(C')$ must be distinct, and $A$ is an injection. \hfill\qedsymbol 

\medskip

\begin{alg}
\label{alg-4.3}
\end{alg}
\begin{center}	
	\begin{tabular}{|r p{4.5in}|}
    \hline
        \rule[-0.75em]{0pt}{2em}\textsc{Input:} & a spanning tree
        $T^*$ of $G^*$ along with an ordering $V=(v_1,v_2,\ldots,v_n)$
        of the vertices in $G$ \\ 
        \textsc{Output:} & a relaxed legal configuration
        $B(T^*)=C\in\mathcal{R}$ \\  
   \hline
            & \\[-0.5em]
	(1) & Let $M_0 = (x)$. \\
        (2) & Let $m = \max_{v \in V}\{ d_{T^*}(x,v) \}$. 
         For $j=1,2,\ldots,m$, let $M_j$ be the sequence (in
         breadth-first order, breaking ties lexicographically by
         subscript) of vertices $v$ for which $d_{T^*}(x,v)=j$; let
         $\overline{M}_j=\{x\colon x\text{ is an entry of } M_j\}$. \\ 
	(3) & For each $u\in\overline{M}_1$, let $C(u)=\deg_G(u)$. \\[0.5em]
	& For $i=2,3,\ldots,m$ and for each $u\in\overline{M}_i$,
         following the ordering in $M_i$, execute steps (4) through
         (7): \\[0.5em] 
         & \begin{tabular}{r p{4 in}}
        (4) & For $r=1,2,\ldots,i-1$, let
        $N_r=(v_{r,1},v_{r,2},\ldots,v_{r,k_r})$ be the sequence (in their
        $M_r$-ordering) of the $k_r \geq 0$ $G$-neighbors of $u$ that
        appear in $M_r$; let
        $\overline{N}_r=\{x\colon x\text{ is an entry of } N_r \}$. \\ 
        (5) & Let $s=\left| \bigcup_{r=1}^{i-1}{\overline{N}_r} \right|$. \\
        (6) & Let $N=(v_{h_1},v_{h_2},\ldots,v_{h_s})$ be the
        sequence determined by concatenating the sequences
        $N_1,N_2,\ldots,N_{i-1}$. \\ 
        (7) & For some $t\in\{1,2,\ldots,s\}$, we have
        $\{v_{h_t},u\}\in E(T^*)$; let $C(u)=\deg_G(u)-t$. \\
           \end{tabular} \\
         &  \\
	\hline
	\end{tabular}
  \end{center}

\bigskip
\noindent
\textit{Proof that B is well-defined.} In step (2), we partition $V$
into the sequences $M_1,M_2,\ldots,M_m$. Step~(3) assigns chips to the
vertices in $\overline{M}_1$, while step (7) assigns chips to the
vertices in $\overline{M}_2,\ldots,\overline{M}_m$. Therefore,
Algorithm~\ref{alg-4.3} at least produces a function
$C\colon V\to\mathbb{N}$. 

Now we use Algorithm~\ref{alg-2.6} to establish that $C$ is
legal. Since $T^*$ is a spanning tree of $G^*$, we know that
$\overline{M}_1$ is nonempty (see step (3)); hence, there is at least
one vertex $u$ such that $C(u)=\deg_G(u)$. Thus
Algorithm~\ref{alg-2.6}, given $C$ as input, can delete the vertices 
in $\overline{M}_1$. This fact is the base case in an induction argument
that proves that in Algorithm~\ref{alg-2.6}, the vertices in
$\overline{M}_1,\overline{M}_2,\ldots,\overline{M}_m$ can be deleted
in the order given by this list. Suppose that this is true for
$\overline{M}_1,\overline{M}_2,\ldots,\overline{M}_{k-1}$, where
$2\leq k<m$. For any $u \in\overline{M}_k$, step (7) assigns
$C(u)=\deg_G(u)-t \geq\deg_G(u)-s$. Recall that $s$ counts the
neighbors in $G$ of $u$ that are in
$\cup_{r=1}^{i-1}{\overline{M}_r}$; in our induction hypothesis, we
have assumed that these neighbors have been deleted from $G$,
resulting, say, in a subgraph $G'$. If other vertices in
$\overline{M}_k$ have been deleted before we consider $u$, then
$\deg_{G'}(u)$ does not increase. Thus, we have
$C|_{V(G')}(u)\geq\deg_{G'}(u)$, so $u$ can be deleted by
Algorithm~\ref{alg-2.6}. \hfill\qedsymbol  

\medskip
\noindent
\textit{Proof that B is an injection.} Suppose that
$T^*_1,T^*_2 \in\mathcal{S}$ satisfy 
\begin{equation*}
  C_1 :=B(T^*_1)=B(T^*_2)=:C_2(:= C);
\end{equation*}
we show that then $T^*_1 = T^*_2$.

Write the breadth-first orderings of $V$
determined during the computation of $B(T^*_1)$ and $B(T^*_2)$ as
$(u_i)_{i=1}^n$ and $(w_i)_{i=1}^n$, respectively. To complete the
proof, we shall find it useful to establish the following lemma. 

\begin{lem}
\label{lem-4.4}
Under the hypothesis that $C_1 = C_2$, if there exists an integer
$j\geq 1$ such that $u_i = w_i$ for all $i \in \{1,\ldots,j \}$, then
the subtree $H^*_1$ of $T^*_1$ induced on $\{ x,u_1,\ldots,u_j \}$ is
identical to the subtree $H^*_2$ of $T^*_2$ induced on
$\{x,w_1,\ldots,w_j \}$. 
\end{lem}

\begin{proof}
We induct on $j$. First note that $H^*_1$, $H^*_2$ are indeed subtrees
of $T^*_1, T^*_2$, respectively, since the sequences $(u_i),(w_i)$ are
defined by breadth-first searches on these trees. It is also clear
from the definitions of $(u_i)$, $(w_i)$ that $u_1$, $w_1$ are adjacent to
$x$ in $H^*_1$, $H^*_2$, respectively. In the case where $j=1$, these
subtrees both consist of $2$-vertex trees containing the edge
$\{x,u_1\}=\{x,w_1\}$ and are therefore identical. 

Now fix $j > 1$, assume that the lemma holds for smaller instances of
$j$, and suppose that $u_i=w_i$ for all $i\in\{1,\ldots,j \}$. Let
$G^*_0$ denote the subgraph of $G^*$ induced on the common vertex set
$U:=\{x,u_1,\ldots,u_j\}$ of $H^*_1$, $H^*_2$, and let
$G_0=G^*_0 - x$. We consider four executions of
Algorithm~\ref{alg-4.3}; in each case, the input vertex ordering is
inherited from $G$. 

The first pair of executions computes
$D_1:=B(H^*_1)$ and $D_2:= B(H^*_2)$, two configurations
on $G_0$. Since $(u_i)_{i=1}^j$, $(w_i)_{i=1}^j$ are initial segments of
$(u_i)$, $(w_i)$, it is evident from Algorithm~\ref{alg-4.3} that
$D_1$, $D_2$ are obtained from $C_1$, $C_2$ by replacing $\deg_G$ in steps
(3),(7) by $\deg_{G_0}$ and restricting the resulting functions to
$U$. Since $C_1=C_2$, we have $D_1=D_2$. For $k=1,2$ and for each
vertex $u\in V(G_0)$, let $t_k(u)$ denote the value of $t$ in step
(7) as Algorithm~\ref{alg-4.3} determines $D_k(u)$; if $D_k(u)$ is
determined in step (3), we take $t_k(u):= 0$. Then 
\begin{equation}
\label{eq-4.1}
  D_k(u)=\deg_{G_0}(u)-t_k(u) \text{ for } k = 1,2 \text{ and each }
  u \in V(G_0).
\end{equation}
The second pair of executions computes $D'_1:=B(H^*_1 - u_j)$ and
$D'_2:=B(H^*_2-w_j)$, two 
configurations on $G'_0:=G_0-u_j=G_0-w_j$. For $k=1,2$ and for each
vertex $u\in V(G'_0)$, define 
$t'_k(u)$ analogously with $t_k(u)$; now we have 
\begin{equation}
\label{eq-4.2}
  D'_k(u) = \deg_{G'_0}(u)-t'_k(u) \text{ for } k = 1,2 \text{ and each }
  u \in V(G'_0).
\end{equation}
Since $(u_i)_{i=1}^j$, $(w_i)_{i=1}^j$ are respectively breadth-first
orderings of $V(H^*_1)$, $V(H^*_2)$, the sequences $(u_i)_{i=1}^{j-1}$,
$(w_i)_{i=1}^{j-1}$ are such orderings of $V(H^*_1-u_j)$,
$V(H^*_2-w_j)$. Thus, during the second pair of executions of
Algorithm~\ref{alg-4.3} described above, every sequence $M_i$ (in the
statement of the algorithm) is the same as during the first pair of
respective executions, except, in passing from the first pair to the
second, the final vertex of $M_m$ (resp.\ $u_j$, $w_j$) has been
deleted. Therefore
\begin{equation}
\label{eq-4.3}
  t'_k(u) = t_k(u)\text{ for } k = 1,2 \text{ and each } u \in V(G'_0).
\end{equation}
Because $D_1 = D_2$, the relations in (\ref{eq-4.1}) imply that
\begin{equation}
\label{eq-4.4}
  t_1(u) = t_2(u) \text{ for each } u \in V(G_0).
\end{equation}
Comparing (\ref{eq-4.4}) with (\ref{eq-4.3}), we see that
\begin{equation}
\label{eq-4.5}
  t'_1(u) = t'_2(u) \text{ for each } u \in V(G'_0).
\end{equation}
It follows from (\ref{eq-4.2}), (\ref{eq-4.5}) that
$D'_1 = D'_2$. As these are configurations on $G'_0$, whose vertex
set is $U\smallsetminus\{ u_j \}=U \smallsetminus \{ w_j \}$, the
induction hypothesis implies that $H^*_1-u_j=H^*_2-w_j$. Finally, from 
(\ref{eq-4.4}), we have $t_1(u_j)=t_2(u_j)$, and in
Algorithm~\ref{alg-4.3}, this means that the vertex $u_j=w_j$ has
the same neighbor in $H^*_1-u_j$ as in $H^*_2-w_j$. Therefore
$H^*_1 = H^*_2$. 
\end{proof}

It follows from Lemma~\ref{lem-4.4}, with $j=n$, that if $(u_i)$ and
$(w_i)$ agree entirely, then $T^*_1 = T^*_2$. Thus, it remains only to
address the case when $u_i \not = w_i$ for some 
$i \in \{ 1,\ldots,n\}$, and here we will reach a contradiction. 

First, notice that according to Algorithm~\ref{alg-4.3}, for any 
$u\in V$, we have $C(u) = \deg_G(u)$ if and only if $u$ is adjacent to
$x$ in both of $T^*_1$, $T^*_2$. Therefore, $T^*_1$, $T^*_2$ 
do not differ in their adjacencies to $x$, and the sequences
$(u_i)$, $(w_i)$ agree in their initial entries, corresponding to the
(necessarily nonempty) neighbor sets of $x$ in $T^*_1$, $T^*_2$. If
there are $\ell$ such neighbors, then $u_i = w_i$ for 
$i \in \{1,2,\ldots,\ell \}$, and we are assuming that $\ell < n$. 

Let $i_0$ denote the least $i$ such that $u_i \not = w_i$. Since 
$\ell < i_0 \leq n$, it is easy to see that Algorithm~\ref{alg-4.3} reaches
step (7) in defining $C_1(u_{i_0})$ and $C_2(w_{i_0})$. Let 
$j = i_0 - 1$, and define $H^*_1$, $H^*_2$ as in the statement of
Lemma~\ref{lem-4.4}. Since 
\begin{equation}
\label{eq-4.6}
  u_i  =  w_i  \text{ for }   i \in \{ 1,\ldots,j \},
\end{equation}
Lemma~\ref{lem-4.4} shows that $H^*_1 = H^*_2$. From (\ref{eq-4.6}),
we also see that $w_{i_0}$ does not appear in the subsequence
$(u_i)_{i=1}^j$, and $u_{i_0}$ does not appear in the subsequence
$(w_i)_{i=1}^j$. Thus, in computing $B(T^*_1)$,
Algorithm~\ref{alg-4.3} processes $u_{i_0}$ before $w_{i_0}$, while in
computing $B(T^*_2)$, Algorithm~\ref{alg-4.3} processes $u_{i_0}$ after
$w_{i_0}$. 

Now consider the instants during the two executions of
Algorithm~\ref{alg-4.3} when step (7) defines $C_1(u_{i_0})$ and 
$C_2(u_{i_0})$. In particular, for $k=1,2$, define $t_k$ as in the
proof of Lemma~\ref{lem-4.4}, so that 
\begin{equation*}
  C_k(u_{i_0}) = \deg_G(u_{i_0}) - t_k(u_{i_0})  \text{ for }  k = 1,2.
\end{equation*}
Since $C_1 = C_2$ by hypothesis, we have
\begin{equation}
\label{eq-4.7}
  t_1(u_{i_0}) = t_2(u_{i_0}).
\end{equation}

As Algorithm~\ref{alg-4.3} executes on $T^*_1$ and is processing 
$u = u_{i_0}$, denote the sequence $N$ in step~(6) by $N_1$. Likewise,
during execution on $T^*_2$ and while processing the same vertex,
denote the corresponding sequence by $N_2$. The entries of $N_1$ are
the $G$-neighbors of $u_{i_0}$ lying (strictly) closer to $x$ in
$T^*_1$ than $u_{i_0}$. Similarly, the entries of $N_2$ are the
$G$-neighbors of $u_{i_0}$ lying (strictly) closer to $x$ in $T^*_2$
than $u_{i_0}$. Since $H^*_1 = H^*_2$, the sequence $N_1$ forms an
initial segment of the sequence $N_2$. It follows from this and
(\ref{eq-4.7}) that the $T^*_1$-neighbor of $u_{i_0}$ closer to $x$
(than $u_{i_0}$) in $T^*_1$ and the $T^*_2$-neighbor of $u_{i_0}$
closer to $x$ in $T^*_2$ are the same. A similar argument shows that
the $T^*_1$- and $T^*_2$-neighbors of $w_{i_0}$ closer to $x$ (than
$w_{i_0}$) in these trees are identical. Under these conditions,
Algorithm~\ref{alg-4.3} necessarily processes $u_{i_0}$ and $w_{i_0}$
in the same order during the computations of $B(T^*_1)$,
$B(T^*_2)$. But we concluded two paragraphs earlier that this is not
the case. This contradiction shows that the case when $u_i \not = w_i$
for some $i \in \{ 1,\ldots,n \}$ is impossible and therefore completes the
proof. 
\end{proof}

\section{Counting pairs in {\boldmath$\mathcal{R}\times V$} with specified game lengths}
\label{Sec-dist-game-lengths}

We turn now to our main results, which enumerate the pairs
$(C,v)\in\mathcal{R}(G)\times V(G)$ such that seeding $C$ at $v$
results in a game of given length $\ell$. These lean
heavily on Algorithms~\ref{alg-4.2} and \ref{alg-4.3}.  We separate
the cases $\ell=0$ and $\ell>0$ because our expression in the second
case (Theorem~\ref{thm-4.9}) does not specialize to that in the first
(Proposition~\ref{prp-4.7}).

In any event, the case $\ell=0$ is substantially easier to handle than
the other, and we address it first. Throughout this section, we
continue to write $G^*$ for the cone of $G$ (joined to $G$ at $x$).
For $v\in V$, let $t_v$ denote the number of spanning trees of
$G^*-xv$. 

\begin{prp}
 \label{prp-4.7}
 The number of pairs $(C,v)$ resulting in a game of length zero is
 $\sum\sb{v\in V}t\sb{v}$.
\end{prp}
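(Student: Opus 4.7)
The plan is to reduce the count of length-zero pairs to a count over (spanning tree, vertex) pairs, using the bijection $A\colon\mathcal{R}\to\mathcal{S}$ from Theorem~\ref{thm-R-is-S}, and then swap the order of summation.

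First I would characterize length-zero seedings. By (\ref{fire-two}) and the definition of relaxation, the game initiated by seeding $C\in\mathcal{R}$ at $v$ has length zero precisely when $C+\mathbf{1}_v$ is already relaxed; since every vertex $u\neq v$ satisfies $(C+\mathbf{1}_v)(u)=C(u)<\deg_G(u)$ by the relaxedness of $C$, this happens if and only if $C(v)+1\leq\deg_G(v)$, i.e., if and only if $v$ is not critical in $C$. So the pairs to enumerate are exactly
\begin{equation*}
\{(C,v)\in\mathcal{R}\times V : C(v)<\deg_G(v)\}.
\end{equation*}

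Next I would identify the critical vertices of $C$ with the $G$-neighbors of $x$ in $A(C)$. Inspecting Algorithm~\ref{alg-4.2}, the edges of the form $\{x,v_k\}$ added to $T^*$ are exactly those in step (3), and these are added for $v_k\in\overline{M}_1$, i.e., for the vertices with $C(v_k)=\deg_G(v_k)$. Conversely, every vertex processed in step~(9) (for $i\geq 2$) receives an edge to a non-$x$ vertex $v_{\ell_j}$ and is assigned $C(u)=\deg_G(u)-j$ with $j\geq 1$, hence is non-critical. Thus for each $C\in\mathcal{R}$,
\begin{equation*}
\{v\in V : C(v)=\deg_G(v)\} = \Gamma_{A(C)}(x),
\end{equation*}
and so the length-zero condition $C(v)<\deg_G(v)$ is equivalent to $xv\notin E(A(C))$.

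Finally I would assemble the count by double-counting. Since $A$ is a bijection,
\begin{equation*}
\bigl|\{(C,v):\text{length zero}\}\bigr|
=\sum_{C\in\mathcal{R}}\bigl|\{v\in V : xv\notin E(A(C))\}\bigr|
=\sum_{T^*\in\mathcal{S}}\bigl|\{v\in V : xv\notin E(T^*)\}\bigr|.
\end{equation*}
Swapping the order of summation gives
\begin{equation*}
\sum_{v\in V}\bigl|\{T^*\in\mathcal{S} : xv\notin E(T^*)\}\bigr|=\sum_{v\in V}t_v,
\end{equation*}
since a spanning tree of $G^*$ avoiding the edge $xv$ is exactly a spanning tree of $G^*-xv$.

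There is no serious obstacle here; the argument is essentially a corollary of Theorem~\ref{thm-R-is-S} once one notices that the bijection $A$ sends "critical in $C$" to "adjacent to $x$ in $A(C)$." The only point that requires care is verifying that the $\overline{M}_1$ step of Algorithm~\ref{alg-4.2} is the \emph{only} mechanism by which edges incident to $x$ are placed in the output tree, which is immediate from the algorithm's text.
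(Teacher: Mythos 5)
Your proposal is correct and follows essentially the same route as the paper: both arguments rest on the observation that a length-zero game at seed $v$ is equivalent to $v$ being non-critical in $C$, which under the bijection $A$ of Theorem~\ref{thm-R-is-S} corresponds exactly to the edge $xv$ being absent from $A(C)$, so that the count becomes $\sum_{v}t_v$. Your write-up merely makes explicit the double-counting step that the paper leaves implicit.
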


\begin{proof}
As shown in the discussion of Algorithm~\ref{alg-4.2}, an edge
$\{x,v\}$ in $T^*$ forces $v$ to be critical in the corresponding
relaxed legal configuration, whereas $v$ will specifically \emph{not}
be critical when that edge is missing from $T^*$. So by removing this
edge from $G^*$ and enumerating the spanning trees, we count the
relaxed legal configurations in which $v$ is not
critical. Now if $v$ is the seed, it will not fire, so the game
length will be zero. Conversely, seeds in length-zero games do not
fire and hence cannot be critical. Therefore, the stated sum neither
under- nor over-counts the desired pairs.
\end{proof}

Before presenting the case $\ell>0$, we need further notation.  For
$v\in V$, let $\mathcal{T}_{v,\ell}$ denote the set of subtrees of
$G$ of order $\ell$ and including $v$. For subgraphs $H$ of $G$
(typically of the form $G-T$, for $T\in \mathcal{T}_{v,\ell}$), let
$r(H)$ denote the number $|\mathcal{R}(H)|$ of relaxed legal
configurations on $H$. 

\begin{thm}
\label{thm-4.9}
The number of pairs $(C,v)$ resulting in a game of length $\ell > 0$ is
\begin{equation*}
  \sum_{v \in V}{ \sum_{T\in\mathcal{T}_{v,\ell}} {r(G-T)} }.
\end{equation*}
\end{thm}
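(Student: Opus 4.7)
The plan is to set up a bijection between the pairs $(C,v) \in \mathcal{R}(G) \times V$ whose game has length $\ell$ and the triples $(v, T, C')$ with $v \in V$, $T \in \mathcal{T}_{v, \ell}$, and $C' \in \mathcal{R}(G - V(T))$. Counting the triples directly yields the claimed formula.

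For the forward direction, given $(C, v)$ with a length-$\ell$ game, let $F$ denote its set of fired vertices. By Lemmas~\ref{lem-2.1} and \ref{lem-4.8}, $F$ is well-defined with $|F| = \ell$ and $v \in F$; moreover $G[F]$ is connected because the cascade propagates from $v$. Since no vertex of $V \setminus F$ fires, $C(w) + |N_G(w) \cap F| \leq \deg_G(w)$ for each $w \notin F$, so $C' := C|_{V \setminus F}$ is relaxed on $G - F$; its legality follows by running Algorithm~\ref{alg-2.6} on $C$, deleting $F$ in the game's firing order and then continuing on $G - F$. To extract $T$, I adopt a canonical firing order (at each step, fire the lex-smallest supercritical vertex) and declare the \emph{trigger parent} of each $u \in F \setminus \{v\}$ to be the $(d(u) + 1)$-th $F$-neighbor of $u$ to fire, where $d(u) := \deg_G(u) - C(u)$. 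Since parents precede children in the firing order, this assignment is acyclic and defines a spanning tree $T$ of $G[F]$, so $T \in \mathcal{T}_{v, \ell}$.

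For the inverse direction, given a triple $(v, T, C')$, I construct $C$ by setting $C|_{V \setminus V(T)} = C'$, $C(v) = \deg_G(v)$, and for each $u \in V(T) \setminus \{v\}$, $C(u) = \deg_G(u) - t_u + 1$, where $t_u$ is the position of $u$'s $T$-parent in the list of $u$'s $G$-neighbors in $V(T)$ that precede $u$ in the BFS ordering of $T$ rooted at $v$ (with lex tie-breaking). This mirrors step~(7) of Algorithm~\ref{alg-4.3}, adapted to the subtree $T$ of $G$ with $v$ playing the role of the cone vertex's distinguished first-level neighbor; the ``$+1$'' accounts for the fact that here the seed $v$ (rather than a cone vertex) supplies the initial extra chip.

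The main obstacle will be verifying two claims: (i) the $C$ built from $(v, T, C')$ lies in $\mathcal{R}(G)$ and the game on $(C, v)$ fires exactly $V(T)$; and (ii) the forward and inverse maps are mutually inverse, which reduces to checking that the canonical cascade order produces trigger parents matching $T$'s rooted parent assignment. Both verifications proceed by induction on the BFS/cascade levels, paralleling the well-definedness and injectivity arguments for Algorithms~\ref{alg-4.2} and \ref{alg-4.3} in the proof of Theorem~\ref{thm-R-is-S}; the key point is that each level-$i$ vertex $u$ of $T$ becomes supercritical precisely when its $T$-parent fires, forcing the cascade to propagate through $T$ as prescribed. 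Once the bijection is in place, summing $r(G - V(T))$ over $(v, T) \in V \times \mathcal{T}_{v, \ell}$ immediately yields $\sum_v \sum_T r(G - T)$, the claimed count.
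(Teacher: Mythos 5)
Your overall decomposition---pairs $(C,v)$ of length $\ell$ versus triples $(v,T,C')$ with $T\in\mathcal{T}_{v,\ell}$ and $C'\in\mathcal{R}(G-V(T))$---is exactly the one the paper uses (its equivalence classes $\overline{Q}_T$ are precisely the fibres over $(v,T)$, each of size $r(G-T)$), and your inverse map is essentially the paper's map $A$ built from Algorithm~\ref{alg-4.3}. The gap lies in your forward map and in part (ii) of your verification plan. You extract $T$ by declaring the trigger parent of $u$ to be the $(d(u)+1)$-th $F$-neighbor of $u$ \emph{in the canonical firing order}, while your inverse construction encodes the parent of $u$ as the $t_u$-th preceding neighbor \emph{in the BFS order of $T$}. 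These two orderings disagree in general, because the lex-smallest-supercritical rule does not fire $V(T)$ level by level: a deeper vertex with a small label can fire before a shallower one. Hence the claim on which (ii) rests---that the cascade's trigger parents match $T$'s rooted parent assignment---is false, and your two maps are not mutually inverse.

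A concrete failure: let $G$ be the $5$-cycle with edges $vc$, $cb$, $bu$, $ua$, $av$, labelled so that $b<c<a<u$, and let $T=G-bu$ rooted at $v$; then $u$'s $T$-parent is $a$, $t_u=1$, and your inverse construction puts $\deg_G$ chips on every vertex. Seeding $v$, the canonical order fires $v,c,b,a,u$, so the first $F$-neighbor of $u$ to fire is $b$, not $a$, and your forward map returns $G-au$ rather than $T$. (One can check that the composite here is a nontrivial permutation of $\mathcal{T}_{v,\ell}$, interchanging $G-bu$ and $G-au$, so no induction can establish that it is the identity.) To repair the argument you must either (a) define the forward map from the chip counts alone via the level structure of Algorithm~\ref{alg-4.2}---there $M_1$ is the set of critical vertices and $M_i$ is built from $M_{i-1}$, independently of any firing schedule---which is what the paper does; or (b) keep the cascade-based forward map but give up mutual inverseness and prove each map injective separately, concluding bijectivity from finiteness (the paper's logical route); note that injectivity of your cascade-based map needs its own induction, since two configurations differing on $F$ generate different canonical firing orders and it is not immediate that they yield different trees. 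Two smaller points: legality of $C'=C|_{V\smallsetminus F}$ on $G-F$ is immediate from Lemma~\ref{lem-2.8}, so the detour through Algorithm~\ref{alg-2.6} is unnecessary; and the relaxedness bound $C(w)+|N_G(w)\cap F|\leq\deg_G(w)$ for $w\notin F$ is correct and matches the paper's argument.
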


\begin{proof}
For $v \in V$, let $\mathcal{R}_{v,\ell}$ denote the set of relaxed
legal configurations on $G$ such that if $v$ is seeded, then the
resulting burn-off game will be of length $\ell$. For 
$R_1,R_2 \in\mathcal{R}_{v,\ell}$, define the relation $\simeq$ as
follows: suppose that when $v$ is seeded in $R_1$ and $R_2$, the
vertices that fire in either game induce the same subgraph $H$ of $G$;
suppose also that $R_1|_{V(H)} = R_2|_{V(H)}$. If, and only if, both of these
conditions hold, we write $R_1 \simeq R_2$. It is clear that
$\simeq$ is an equivalence relation on $\mathcal{R}_{v,\ell}$; let
$\mathcal{Q}_{v,\ell}$ be the set of its equivalence classes in
$\mathcal{R}_{v,\ell}$. To prove Theorem~\ref{thm-4.9}, it will be
helpful to establish injections
$A\colon\mathcal{T}_{v,\ell}\to\mathcal{Q}_{v,\ell}$ and 
$B\colon\mathcal{Q}_{v,\ell}\to\mathcal{T}_{v,\ell}$.  

Define $A\colon\mathcal{T}_{v,\ell}\to\mathcal{Q}_{v,\ell}$ as
follows. Let $T \in\mathcal{T}_{v,\ell}$, and let $H$ be the subgraph
of $G$ induced on $V(T)$. Create $H'$ as follows: to each 
$u \in V(T)$, append $\deg_G(u)-\deg_H(u)$ leaves to $u$. Let $J$ be this
set of leaves. Now let $T'$ be the spanning tree of $H'$ consisting of
$T$ and $J$. Create $T^*$ by appending the vertex $x$ and the edge 
$\{x,v \}$ to $T'$. Use $T^*$ (with $H'$ as the underlying graph) as the
input in Algorithm~\ref{alg-4.3}; let $C^*$ be the output
configuration. Let $Q$ be a configuration on $G$ defined by 
$Q(v) =C^*(v)$ and $Q(u) = C^*(u)+1$ for each 
$u \in V(H)\smallsetminus v$. Let $Z$ be
any relaxed legal configuration on $G-H$. Define $Q(w) = Z(w)$ for
each $w \in V(G-H)$. Now $Q$ is a configuration on $G$. We demonstrate
below that $Q \in\mathcal{R}_{v,\ell}$; thus, we may let
$\overline{Q}$ denote the equivalence class of $Q$. Finally, let 
$A(T) = \overline{Q}$. 

\medskip
\noindent
\textbf{Claim 1.} \textit{A is well-defined.}

\medskip
\noindent
\textit{Proof of claim.} To show that $Q \in\mathcal{R}_{v,\ell}$, we
will demonstrate that: (a) $Q$ is a relaxed legal configuration on $G$;
and (b) seeding $v$ in $Q$ results in a burn-off game of length $\ell$. 

\medskip
\noindent
(a) \textit{$Q$ is a relaxed legal configuration on $G$.}

\medskip
Because $v$ is the only neighbor of $x$ in $T^*$, only $v$ is critical
in $C^*$ (see step (7) of Algorithm~\ref{alg-4.3}). As we define $Q$,
then, adding a chip to each $u \in V(T)\smallsetminus v$ does not make
any of these vertices supercritical. We choose $Z$ to be any relaxed legal
configuration on $G-T$, so none of the vertices in $V(G-T)$ are
supercritical. Therefore, $Q$ is relaxed. 

We appeal to Algorithm~\ref{alg-2.6} to demonstrate the legality of
$Q$. We defined $C^*$ using Algorithm~\ref{alg-4.3}, so $C^*$ is a
legal configuration on $H'$. Thus, if Algorithm~\ref{alg-2.6} operates
on $C^*$, it will provide a deletion sequence $S$ of $V(H')$. Since
every $w \in J$ is a leaf, each $\deg_{H'}(w) = 1$. Since only $v$ is
critical in $C^*$, we must have $C^*(w) = 0$. Without loss of
generality, then, we may permute $S$ so that $V(H)$ is processed
before $J$ and see that this new deletion sequence $S'$ also
satisfies the requirements of Algorithm~\ref{alg-2.6}. In passing from
$C^*$ to $Q$, we let $Q(v) = C^*(v)$ and $Q(u)=C^*(u)+1$ for each
$u \in V(H)\smallsetminus v$. Because $\deg_{H'}(x)=\deg_G(x)$ for
every $x \in V(H)$, Algorithm~\ref{alg-2.6} can begin to process $Q$
on $G$ in the same order found in the initial subsequence of $S'$
containing the vertices of $V(H)$. Since we extended $Q$ to $V(G-T)$
by choosing any legal configuration $Z$ on the subgraph $G-T$,
Algorithm~\ref{alg-2.6} can finish processing $Q$, thereby confirming
the legality of $Q$.  

\medskip
\noindent
(b) \textit{Seeding $v$ in $Q$ results in a game of length $\ell$.}

\medskip
We first show that each vertex in $T$ fires, and then show that none
of the vertices in $G-T$ fire. Since $T$ has $\ell$ vertices, and 
no vertex can fire twice (by Lemma~\ref{lem-4.8}), the resulting game
will be of length $\ell$. 

Clearly, $v$ can fire. For $u \in V(T)\smallsetminus v$, let
\begin{equation*}
  S_u = \{ w \in\Gamma_H(u)\colon d_{H'}(w,x)<d_{H'}(u,x) \}
\end{equation*}
and $s_u = |S_u|$. By step (7) of Algorithm~\ref{alg-4.3}, we have
\begin{equation*}
  C^*(u)\geq \deg_{H'}(u)-s_u = \deg_G(u)-s_u.
\end{equation*}
We defined $Q(u) = C^*(u) + 1$, so once the vertices in $S_u$ fire,
the number of chips on $u$ will be at least $\deg_G(u) + 1$, allowing
$u$ to fire as well. 

For $w \in V(G-T)$, let $s_w = | \Gamma_T(w) |$. In each relaxed legal
configuration $Z$ on $G-T$, we must have
$Z(w)\leq\deg_{G-T}(w)=\deg_G(w)-s_w$. Because the vertices in $T$
contribute a total of 
$s_w$ chips to $w$ once they have all fired, the number of chips on
$w$ will never exceed $\deg_G(w)$. Since we define $Q(w) = Z(w)$, we
know that $w$ will not fire when $v$ is the seed. 

We have shown that $Q$ is a relaxed legal configuration on $G$ such
that if $v$ is seeded, the resulting game will have length $\ell$; thus,
we know that $Q \in\mathcal{R}_{v,\ell}$. Hence, $A$ is
well-defined. \hfill\qedsymbol 

\medskip
\noindent
\textbf{Claim 2.} \textit{$A$ is injective.}

\medskip
\noindent
\textit{Proof of claim.} We will show that for distinct trees  
$T_1, T_2 \in \mathcal{T}_{v,\ell}$, we have $A(T_1) \neq A(T_2)$. For this
argument, we let $Q_{T_1}$, $Q_{T_2}$ denote one of the relaxed legal
configurations on $G$ that result as we find $A(T_1)$, $A(T_2)$
respectively. (Note that $A(T_1)$ does not equal $Q_{T_1}$, but rather
$\overline{Q}_{T_1}$; similarly, $A(T_2) = \overline{Q}_{T_2}$.) 

First suppose that $T_1$ and $T_2$ contain the same $\ell$
vertices. Because $T_1$ and $T_2$ share the same vertex set, we know
that $H_1$ and $H_2$ (as defined in the proof of Claim~1) are
identical. The creation of $H'_1$ (and $H'_2$) does not involve the
structure of $T_1$ (and $T_2$), so $H'_1$ and $H'_2$ are identical
as well.  Consequently, we know that $J_1 = J_2$, which implies that
what makes $T^*_1$ and $T^*_2$ distinct is the distinct structures of
$T_1$ and $T_2$. When we use $T^*_1$ and $T^*_2$ as inputs to
Algorithm~\ref{alg-4.3}, the injective nature of the algorithm implies
that $C^*_1$ and $C^*_2$ will be distinct; thus, $Q_{T_1}|_{V(T_1)}$
and $Q_{T_2}|_{V(T_2)}$ will be distinct. Because $V(T_1) = V(T_2)$,
we have $\overline{Q}_{T_1}\neq\overline{Q}_{T_2}$. Thus,
$A(T_1)\neq A(T_2)$.

Now suppose that $T_1$ and $T_2$ do not contain the same $\ell$ vertices,
and that $A(T_1)=A(T_2)=\overline{Q}$ for some
$\overline{Q}\in\mathcal{Q}_{v,\ell}$. When we showed above that $A$ is
well-defined, 
we saw that seeding $v$ in $Q$ results in a game in which precisely
the vertices in the underlying tree fire. But the original trees
$T_1$, $T_2$ considered in this case are distinct. The deterministic
nature of burn-off games (see Lemma~\ref{lem-2.1}) prohibits this
result; the same set of vertices must fire in any burn-off game played
on a given configuration with seed $v$. Thus,
$A(T_1)\neq A(T_2)$. \hfill\qedsymbol 

\medskip
Having established that $A\colon\mathcal{T}_{v,\ell}\to\mathcal{Q}_{v,\ell}$
is a well-defined injection, we turn our 
attention to showing the same is true of
$B\colon\mathcal{Q}_{v,\ell}\to\mathcal{T}_{v,\ell}$, defined as
follows. Let 
$\overline{Q}\in\mathcal{Q}_{v,\ell}$ so that $Q\in\overline{Q}$. Let $H$
denote the subgraph induced on the vertices that fire if $v$ is seeded
in $Q$.  

Because $Q \in \overline{Q}$, seeding $v$ in $Q$ results in a burn-off
game in which the vertices of $H$ fire. With
$h=|V(H)\smallsetminus v|$, let $F=(v,u_1,u_2,\ldots,u_h)$ be such a
firing sequence of $V(H)$. For $m\in\{1,\ldots,h\}$, let $d_m$ denote
the number of $H$-neighbors of $u_m$ 
that precede $u_m$ in $F$. At the time $u_m$ fires, it must contain at
least $\deg_G(u_m)+1$ chips, so 
\begin{equation*}
  Q(u_m) \geq\deg_G(u_m)+1-d_m .
\end{equation*}
This inequality is clearly equivalent to
\begin{equation*}
  Q(u_m)\geq | \Gamma_{G-H}(u_m) | + \deg_H(u_m) + 1 - d_m,
\end{equation*}
and since $\deg_H(u_m)\geq d_m$, we may subtract
$|\Gamma_{G-H}(u_m)|$ from the right side without it becoming
negative. On the left side, subtracting $| \Gamma_{G-H}(u_m) |$
amounts to removing that many chips from $u_m$. Let $Q^*_H$ denote the
configuration on $H$ that results if, for each $m\in\{1,\ldots,h\}$, we
remove $| \Gamma_{G-H}(u_m) |$ chips from $u_m$. Thus, we have 
\begin{equation*}
  Q^*_H(u_m) \geq \deg_H(u_m)+1-d_m \text{ for all } m\in\{1,\ldots,h\}.
\end{equation*}
Since $\deg_H(u_m) \geq d_m$, we may remove one additional chip
from each $w \in V(H)\smallsetminus v$. Let $Q_H$ denote the resulting
configuration on $H$, so that 
\begin{equation}
\label{eq-FiringForward}
  Q_H(u_m) \geq \deg_H(u_m)-d_m \text{ for all } m\in\{1,\ldots,h\}.
\end{equation}
Note that $v$ is the only vertex in $V(H)$ that is critical in $Q_H$.

Our intention is to input the graph $H$ and the configuration $Q_H$
into Algorithm~\ref{alg-4.2}. The algorithm requires that $H$ be
connected and that $Q_H$ be a relaxed legal configuration. Since $H$ is a
subgraph of $G$ induced on the vertices that fire during a burn-off
game, $H$ is connected. Our choice of $Q$ comes from an equivalence
class of the relation $\simeq$ on $\mathcal{R}_{v,\ell}$, so $Q$ is a
relaxed configuration on $G$. For each $u \in V(H)$, we remove
$|\Gamma_{G-H}(u)|$ chips from $u$, so $Q^*_H$ is a relaxed
configuration on $H$. In creating $Q_H$ from $Q^*_H$, we remove a chip
from each $w \in V(H)\smallsetminus v$, so $Q_H$ is a relaxed
configuration on $H$. 

Finally, we appeal to Lemma~\ref{lem-2.4} to show that $Q_H$ is a
legal configuration on $H$. Reverse the firing sequence $F$ by
relabeling $u_{h-t+1}$ as $w_t$ for $t\in\{1,\ldots,h\}$, and label $v$ as
$w_{h+1}$; let $d'_t$ denote the number of $H$-neighbors of $w_t$
that precede $w_t$ in the sequence $(w_1,w_2,\ldots,w_{h+1})$
(thus, $d_{h-t+1}=\deg_H(w_t)-d'_t$ for $1\leq t\leq h$).
From (\ref{eq-FiringForward}), we know that
each $t\in\{1,\ldots,h\}$ satisfies
\begin{eqnarray*}
  Q_H(w_t) &=&    Q_H(u_{h-t+1}) \\
           &\geq& \deg_H(u_{h-t+1})-d_{h-t+1} \\
           &=&    \deg_H(w_t)-(\deg_H(w_t)-d'_t) \\
           &=&    d'_t,
\end{eqnarray*}
which is the condition (\ref{eqn-PropertyP})
in Lemma~\ref{lem-2.4} for these vertices. It
is easy to see that the analogous inequality holds for $v$, so $Q_H$
is a legal configuration.   

We apply Algorithm~\ref{alg-4.2} with the connected graph $H$ and the
relaxed legal configuration $Q_H$ on $H$. The algorithm outputs a
spanning tree $T^*$ of $H^*$. Because $v$ is the only vertex in $V(H)$
that is critical in $Q_H$, the only vertex adjacent to the special
vertex $x$ in $H^*$ is $v$. Let $T = T^* - x$. Finally, define
$B(\overline{Q}) = T$. This tree is clearly a member of
$\mathcal{T}_{v,\ell}$, so $B$ is well-defined. 

\medskip
\noindent
\textbf{Claim 3.} \textit{$B$ is injective.}

\medskip
\noindent
\textit{Proof of claim.} We will show that for distinct $\overline{Q},
\overline{Q'}\in\mathcal{Q}_{v,\ell}$, we have
$B(\overline{Q}) \neq B(\overline{Q'})$. Let $Q$, $Q'$ be
representatives of $\overline{Q}$, $\overline{Q'}$ respectively. Let
$H$,  $H'$ denote the subgraphs induced 
on $G$ by the $\ell$ vertices that fire when $Q$, $Q'$ respectively are
seeded at $v$. 

First, we consider the case where $H = H' =: H_0$. Because
$\overline{Q}$ and $\overline{Q'}$ are distinct, we know that
$Q|_{V(H_0)} \neq Q'|_{V(H_0)}$. Therefore, $Q_{H_0}$ and $Q'_{H_0}$
will be distinct relaxed legal configurations on $H_0$. The injective
nature of Algorithm~\ref{alg-4.2} ensures that
$B(\overline{Q}) \neq B(\overline{Q'})$. 

Second, we consider the case where $H \neq H'$. When either of these
subgraphs is used as the underlying graph in an iteration of
Algorithm~\ref{alg-4.2}, the output is a spanning tree of that
subgraph (with the edge $\{ v,x \}$, which we subsequently
delete). Since $H \neq H'$, these two trees must be distinct, so
$B(\overline{Q}) \neq B(\overline{Q'})$. \hfill\qedsymbol 

\medskip
Assisted by the following claim, finally, we will be able to turn our
attention to the inner sum that appears in the statement of
Theorem~\ref{thm-4.9}. Given $T \in\mathcal{T}_{v,\ell}$, let us
denote $A(T)$ by $\overline{Q}_T$. 

\medskip
\noindent
\textbf{Claim 4.}
\textit{For each $T\in\mathcal{T}_{v,\ell}$, we have $|\overline{Q}_T|=r(G-T)$.}

\medskip
\noindent
\textit{Proof of claim.} Because $\overline{Q}_T$ is an equivalence
class of the relation $\simeq$ on $\mathcal{R}_{v,\ell}$, it collects
all relaxed legal configurations that agree on $V(H)$. Thus, two
elements of $\overline{Q}_T$ can differ only on $V(G-T)$. By
Lemma~\ref{lem-2.8}, the legality of $Q\in\overline{Q}_T$ on $G$
implies the legality of $Q|_{V(G-T)}$ on $G-T$. Hence,
$|\overline{Q}_T|\leq r(G-T)$.  Let $L$ represent any relaxed legal
configuration counted by $r(G-T)$, and use $L$ for $Z$ in the
definition of $A(T)$. This has the effect of extending $L$ to the rest
of $G$ using $Q|_{V(T)}$, which is common to all
$Q\in\overline{Q}_T$. Because we used 
$A\colon\mathcal{T}_{v,\ell}\to\mathcal{R}_{v,\ell}$ in bringing
about this extension, the resulting configuration is legal on
$G$. Since this extension is clearly injective, we have
$|\overline{Q}_T|\geq r(G-T)$. \hfill\qedsymbol 

\medskip
To complete the proof of Theorem~\ref{thm-4.9}, it suffices to show
that for each $v \in V$, the number $|\mathcal{R}_{v,\ell}|$
of relaxed legal configurations $C$ that result in a game of length
$\ell$ when seeded at $v$ equals 
$\sum_{T\in\mathcal{T}_{v,\ell}}{r(G-T)}$. 
Since both of $A$, $B$ are injections, and both of
$\mathcal{T}_{v,\ell}$, $\mathcal{Q}_{v,\ell}$ are finite sets, it follows
that $A$ is in fact a bijection. (The same is true of $B$, but we
don't use this fact.) Thus, as $T$ runs through
$\mathcal{T}_{v,\ell}$, its image $A(T)=\overline{Q}_T$ runs through
$\mathcal{Q}_{v,\ell}$, and it follows that 
\begin{equation*}
  |\mathcal{R}_{v,\ell}| = \sum_{\overline{Q}\in\mathcal{Q}_{v,\ell}}{|\overline{Q}|}
    = \sum_{T \in\mathcal{T}_{v,\ell}}{|\overline{Q}_T|}
    = \sum_{T \in\mathcal{T}_{v,\ell}}{r(G-T)},
\end{equation*}
where Claim~4 justifies the last identity.
\end{proof}

\section{Examples}
\label{Sec:example}

We first consider an example illustrating the use of 
Theorem~\ref{thm-R-is-S}, Proposition~\ref{prp-4.7}, and
Theorem~\ref{thm-4.9} in estimating the probability distribution of
game length in a long sequence of burn-off games.  Indeed,
understanding this distribution was our primary original motivation to
establish these results.

Before getting to specifics, let us recall the stochastic process we
set up in \cite{PerkinsKayll2016}.  The state space of our Markov
chain $(X_n)_{n\geq 0}$ is the set $\mathcal{R}$.  Each transition is
determined by randomly seeding a vertex and
relaxing the resulting configuration; to be precise, given
$X_{n}\in\mathcal{R}$, the next state is determined by choosing
$v\in V$ uniformly at random and taking $X_{n+1}$ to be the
relaxation of $X_{n}+\mathbf{1}_v$. For integers $m\geq 1$ and states
$C$, we denote by $N_m(C)$ the number of visits of $(X_n)$ to $C$
during the first $m$ transition epochs. In \cite{PerkinsKayll2016}, we
proved that $(X_n)$ is irreducible and---by arguing that it has a
doubly stochastic transition matrix---has a uniform stationary
distribution. Thus, we obtained the following consequence:
\begin{equation}
  \label{thm-limit-ratio}
  \Pr\left\{\lim\sb{m\to\infty}\frac{N_m(C)}{m}=\frac{1}{|\mathcal{R}|}\right\}=1 
  \text{ for all } C\in\mathcal{R}
  \text{ (irrespective of the initial state)}.
\end{equation}
So with high probability, the long-term proportion of time that
$(X_n)$ spends in any given state is equally spread across the 
states.

Now consider the graph $G$ consisting of a triangle $K_3$ with a
pendant vertex joined to one of its vertices by a single edge.
As $\tau(G^*)=40$, Theorem~\ref{thm-R-is-S} shows that there are $40$
relaxed legal configurations on $G$. Because $G$ has order four, there
are $160$ pairs $(C,v)\in\mathcal{R}\times V$. Of these, $82$ pairs
result in a game of length zero (Proposition~\ref{prp-4.7}). We know
that burn-off games on $G$ cannot have length greater than four
(Lemma~\ref{lem-4.8}). Four applications of Theorem~\ref{thm-4.9} show
that the numbers of pairs resulting in games of length one, two, three,
and four are $35$, $16$, $15$, and $12$, respectively.
Now the uniformity in both the seed choice and the state visitation
over a long game sequence 
(viz.\ (\ref{thm-limit-ratio})) justifies the probability distribution
of game lengths displayed in Table~\ref{Length-distrib}.

\begin{table}
\caption{Distribution of lengths for burn-off games on $K_3$ plus a pendant vertex}
\begin{center}
\begin{tabular}{|l|c|c|c|c|c|}
  \hline
  \rule[-0.5em]{0em}{1.5em}\textsf{game length} & 0 & 1 & 2 & 3 & 4 \\
  \hline
  \rule[-1.1em]{0em}{2.75em}\textsf{probability} &
  $\dsp{\frac{82}{160}}$ & $\dsp{\frac{35}{160}}$ &
  $\dsp{\frac{16}{160}}$ & $\dsp{\frac{15}{160}}$ & $\dsp{\frac{12}{160}}$ \\[1em]
  \cdashline{2-6}[0.5pt/1pt]
  \multicolumn{1}{|r|}{\rule[-1.1em]{0em}{2.75em}\textsf{\small(as percent)}} &
  $51.25$ & $21.875$ & $10$ & $9.375$ & $7.5$ \\
  \hline
\end{tabular}
\end{center}
 \label{Length-distrib}
\end{table}

For comparison, we ran a computer simulation of 10,000 burn-off
games on $G$ and plotted the results together with the probabilities
in Table~\ref{Length-distrib}. This plot appears in
Figure~\ref{compare-sim-vs-anal}, where the left bars display the
simulation data and the right bars display the distribution.
We confirmed the close visual agreement between the analytical and
simulated data using a $\chi^2$ goodness-of-fit test
(more to check our simulation than our theorems!). Even with the level
of significance $\alpha$ as high as $0.1$, this test did not reject
the hypothesis that the analytical results correctly model the
simulated data.

\begin{figure}
\begin{center}
\includegraphics[width=5.65in]{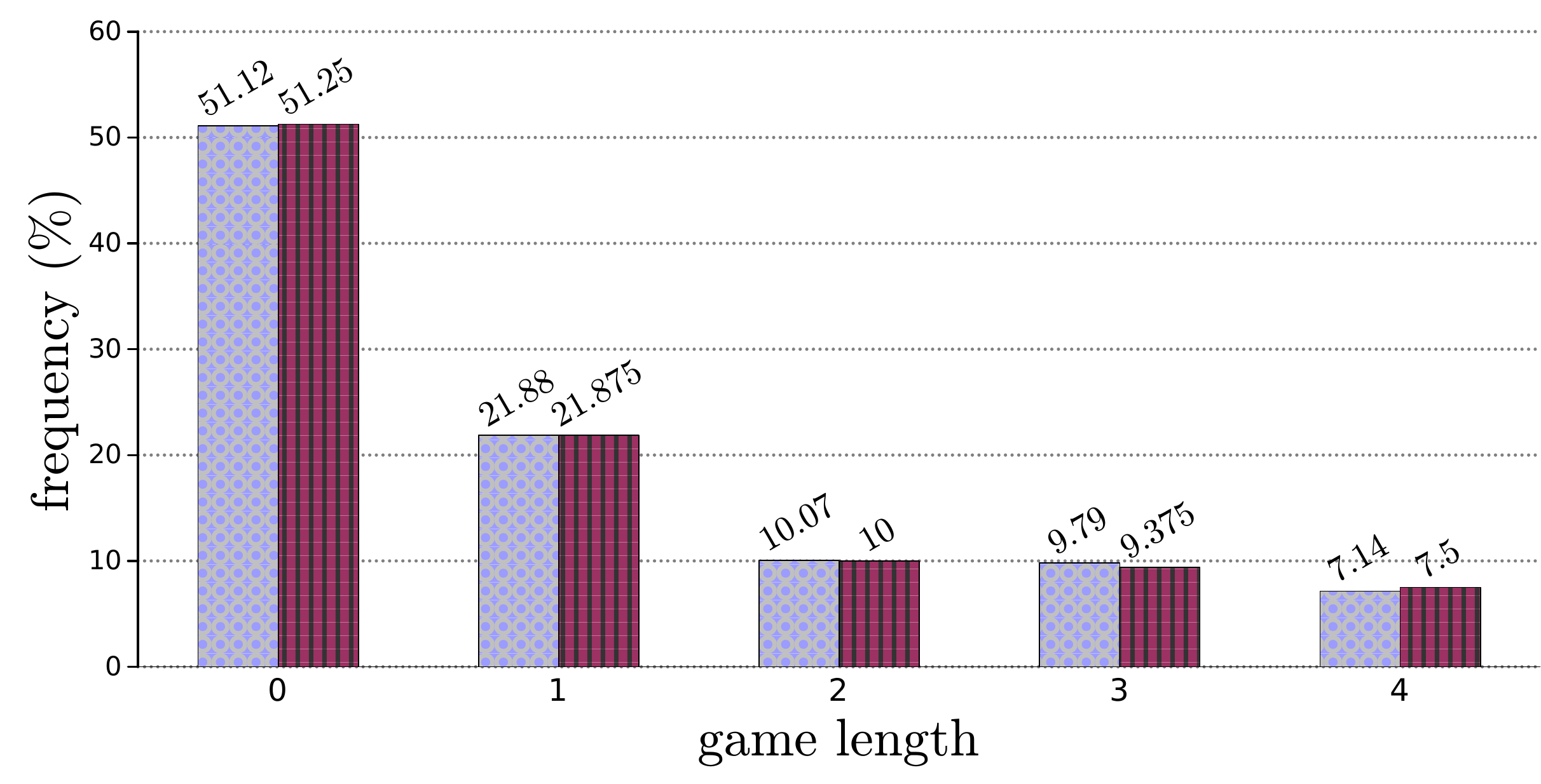}
\caption{Comparison of simulated data (10,000 trial games) with
  analytic results from Table~\protect{\ref{Length-distrib}}}
\label{compare-sim-vs-anal}
\end{center}
\end{figure}

In a follow-up paper to the present one---which has already appeared as
\cite{PerkinsKayll2017}---we apply Proposition~\ref{prp-4.7} and
Theorem~\ref{thm-4.9} to determine the game-length distribution in a
long sequence of burn-off games on a complete graph.   Thus we recover
the corresponding enumeration results obtained by Cori, Dartois, and Rossin
in \cite{CoriDartoisRossin2004}. These authors' approach is through the
(univariate) `avalanche polynomial', which is, in our terminology, a
generating function for the number of games of varying lengths. More recently, 
these polynomials were refined to their multivariate analogues in
\cite{ACFPK-avalanche2018}, where they are characterized for some
basic graph families (trees, cycles, wheels, and complete graphs).

\section{Concluding remarks}

Early papers (e.g., \cite{baktang1989}, \cite{baktangwiesenfeld1987},
\cite{tangbak88}) that inspired the invention of the abelian sandpile
model by Dhar~\cite{Dhar90} studied chip-firing games, in part,
through computer simulations. Our first example in Section~\ref{Sec:example}
is intended to illustrate how our main
results (Theorem~\ref{thm-R-is-S}, Proposition~\ref{prp-4.7},
Theorem~\ref{thm-4.9}) offer an analytic explanation for the
game-length distribution of a burn-off game, at least on the graph
considered there. Though the two results from
Section~\ref{Sec-dist-game-lengths} do not offer closed-form
expressions for the quantities being counted, the Matrix-Tree Theorem
(see, e.g., \cite{BondyMurty08}), together with
Theorem~\ref{thm-R-is-S}, render as manageable the summands
$t_v$ and $r(G-T)$ in Proposition~\ref{prp-4.7} and
Theorem~\ref{thm-4.9}. Thus, in principle, the exact probability
distribution is available. 

\subsection*{Closure}

Somewhat out of sequence, this paper brings to an end our long-term
project of producing a published account of the second author's
dissertation~\cite{perkins2005}. Besides the already mentioned
articles~\cite{KayllPerkins2013} and \cite{PerkinsKayll2016}, further
results from \cite{perkins2005} appear in \cite{KayllPerkins2009} and
\cite{PerkinsKayll2017}.  As mentioned at the end of
Section~\ref{Sec:example}, the last of these cites the present paper;
this is because it was written afterwards.

\subsection*{Acknowledgements}
Most of the manuscript for this article was finalized while the first author was
on sabbatical at the University of Otago in Dunedin, New Zealand. The
author gratefully acknowledges the support of Otago's Department of
Mathematics and Statistics. Both authors thank the referees for the 
constructive suggestions (and promptness!).

\bibliographystyle{abbrv}
\bibliography{PerkinsKayll-3-biblio}

\end{document}